\numberwithin{equation}{section}
\theoremstyle{plain}
\newtheorem{lemma}{Lemma}[section]
\newtheorem{theorem}[lemma]{Theorem}
\newtheorem{proposition}[lemma]{Proposition}
\newtheorem{corollary}[lemma]{Corollary}
\theoremstyle{definition} 
\newtheorem{example}[lemma]{Example}
\theoremstyle{remark}
\newtheorem{remark}[lemma]{Remark} 
\def\urltilda{\kern -.15em\lower .7ex\hbox{\~{}}\kern .04em}
\renewcommand{\dim}{\operatorname{dim}}
\newcommand{\depth}{\operatorname{depth}}
\newcommand{\Ext}{\operatorname{Ext}}
\newcommand{\hh}{\operatorname{H}} 
\newcommand{\zz}{\operatorname{Z}}
\newcommand{\bb}{\operatorname{B}}
\newcommand{\Hom}{\operatorname{Hom}}
\newcommand{\injdim}{\operatorname{inj\,dim}}
\newcommand{\projdim}{\operatorname{proj\,dim}}
\newcommand{\flatdim}{\operatorname{flat\,dim}}
\renewcommand{\le}{\leqslant}
\renewcommand{\ge}{\geqslant}
\newcommand{\RHom}{\operatorname{\mathsf{R}Hom}}
\newcommand{\Spec}{\operatorname{Spec}}
\newcommand{\Tor}{\operatorname{Tor}}
\newcommand{\splf}{\operatorname{splf}}
\newcommand{\width}{\operatorname{width}}
\newcommand{\Ltensor}{\otimes^{\mathsf{L}}}
\newcommand{\susp}{\mathsf{\Sigma}}
\newcommand{\RGamma}{\mathsf{R}\varGamma}
\newcommand{\LLambda}{\mathsf{L}\varLambda}
\newcommand{\xra}{\xrightarrow}
\newcommand{\lra}{\longrightarrow}
\newcommand{\bbZ}{\mathbb Z} 
\newcommand{\bbN}{\mathbb N}
\newcommand{\bsx}{\boldsymbol{x}}
\newcommand{\fa}{\mathfrak{a}} 
\newcommand{\fb}{\mathfrak{b}}
\newcommand{\fm}{\mathfrak{m}} 
\newcommand{\fp}{\mathfrak{p}}
\newcommand{\fn}{\mathfrak{n}} 
\newcommand{\fq}{\mathfrak{q}}
\begin{document}

\title[Rigidity of Ext and Tor]{Rigidity of Ext and Tor with
  coefficients in residue fields of a commutative noetherian ring}

\author[L.\,W.\ Christensen]{Lars Winther Christensen}

\address{L.W.C. Texas Tech University, Lubbock, TX 79409, U.S.A.}

\email{lars.w.christensen@ttu.edu}

\urladdr{http://www.math.ttu.edu/~lchriste}

\author[S.\,B.\ Iyengar]{Srikanth B. Iyengar}

\address{S.B.I. University of Utah, Salt Lake City, UT 84112, U.S.A.}

\email{iyengar@math.utah.edu}

\urladdr{http://www.math.utah.edu/~iyengar}

\author[T.\ Marley]{Thomas Marley}

\address{T.M. University of Nebraska-Lincoln, Lincoln, NE 68588,
  U.S.A.}

\email{tmarley1@unl.edu}

\urladdr{http://www.math.unl.edu/~tmarley}

\thanks{L.W.C.\ was partly supported by NSA grant H98230-14-0140 and
  Simons Foundation collaboration grant 428308; S.B.I.\ was partly
  supported by NSF grant DMS-1503044.}

\date{27 July 2017}

\keywords{Ext, Tor, rigidity, flat dimension, injective dimension.}

\subjclass[2010]{13D07; 13D05}

\begin{abstract}
  Let $\fp$ be a prime ideal in a commutative noetherian ring $R$.  It
  is proved that if an $R$-module $M$ satisfies
  $\Tor_{n}^{R}(k(\fp),M)=0$ for some $n\ge \dim R_{\fp}$, where
  $k(\fp)$ is the residue field at $\fp$, then
  $\Tor^{R}_{i}(k(\fp),M)=0$ holds for all $i\ge n$. Similar rigidity
  results concerning $\Ext_R^{*}(k(\fp),M)$ are proved, and
  applications to the theory of homological dimensions are explored.
\end{abstract}

\maketitle

\section{Introduction} 

Flatness and injectivity of modules over a commutative ring $R$ are
characterized by vanishing of (co)homological functors and such
vanishing can be verified by testing on cyclic $R$-modules. We discuss
the flat case first, and in mildly greater generality: For any
$R$-module $M$ and integer $n\ge 0$, one has $\flatdim_RM < n$ if and
only if $\Tor^R_{n}(R/\fa,M)=0$ holds for all ideals $\fa \subseteq
R$. When $R$ is noetherian, and in this paper we assume that it is, it
suffices to test on modules $R/\fp$ where $\fp$ varies over the prime
ideals in $R$.

If $R$ is local with unique maximal ideal $\fm$, and $M$ is finitely
generated, then it is sufficient to consider one cyclic module, namely
the residue field $k:=R/\fm$. Even if $R$ is not local and $M$ is not
finitely generated, finiteness of $\flatdim_R M$ is characterized by
vanishing of Tor with coefficients in fields, the residue fields
$k(\fp) := R_\fp/\fp R_\fp$ to be specific. While vanishing of
$\Tor^R_*(k(\fp),M)$ for any one particular residue field does not
imply that $\flatdim_RM$ is finite, one may still ask if vanishing of
a single group $\Tor_n^R(k(\fp),M)$ implies vanishing of all higher
groups, a phenomenon known as \emph{rigidity}.  While this does not
hold in general (cf. Example~\ref{exa:injloc}), we prove that it does
hold if $n$ is sufficiently large; see Theorem~\ref{thm:fdim} for the
proof.

\begin{theorem}
  \label{thm:1}
  Let $\fp$ be a prime ideal in a commutative noetherian ring $R$ and
  let $M$ be an $R$-module.  If one has $\Tor_{n}^{R}(k(\fp),M)=0$ for
  some integer $n \ge \dim R_\fp$, then $\Tor_i^R(k(\fp),M) = 0$ holds
  for all $i \ge n$.
\end{theorem}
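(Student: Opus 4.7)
The plan is to reduce to the local case and proceed by induction on Krull dimension. Since $k(\fp)$ is already an $R_\fp$-module, localization yields $\Tor_i^R(k(\fp), M) \cong \Tor_i^{R_\fp}(k(\fp), M_\fp)$ for every $i$. So I would replace $R$ by $R_\fp$ and assume $(R,\fm,k)$ is local of dimension $d$, with $\Tor_n^R(k, M) = 0$ for some $n \ge d$, and aim to conclude that $\Tor_i^R(k, M) = 0$ for all $i \ge n$.

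The strategy is to prove the stronger statement $\flatdim_R M \le n$; combined with the hypothesis at $i = n$, this gives the conclusion at once. The proof proceeds by induction on $d$.

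Base case $d = 0$: here $R$ is Artinian, $\fm$ is nilpotent, and every finitely generated $R$-module $N$ has a composition series with successive quotients $\cong k$. Iterating the long exact sequence of Tor extends the vanishing $\Tor_n^R(k, M) = 0$ to $\Tor_n^R(N, M) = 0$ for all finitely generated $N$, which is enough to conclude $\flatdim_R M \le n - 1 \le n$.

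For the inductive step $d \ge 1$, I would choose $x \in \fm$ lying outside every minimal prime of $R$, so that $R' = R/xR$ has dimension $d-1$. A Koszul-type change-of-rings argument relates $\Tor_*^R(k, M)$ to $\Tor_*^{R'}(k, M/xM)$ and $\Tor_*^{R'}(k, (0:_M x))$, and the inductive hypothesis applied to $R'$ (the required degrees remain $\ge d - 1$) then constrains each piece and yields the flat-dimension bound for $M$ over $R$. The principal obstacle is that $x$ is generally not a non-zero-divisor on $M$, so the $x$-torsion of $M$ must be tracked alongside $M/xM$ through the spectral sequence, making the bookkeeping delicate. A secondary subtlety arises when $\depth R = 0$, where no parameter is $R$-regular; here one may pass to the completion $\widehat R$, using that $\widehat R$ is faithfully flat over $R$ and that $k \Ltensor_R M \simeq k \Ltensor_{\widehat R} \widehat M$, to obtain a more tractable setting (for instance a Cohen presentation $\widehat R = S/I$ with $S$ regular) in which Koszul techniques apply uniformly.
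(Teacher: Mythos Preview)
Your reduction to the local case is correct and matches the paper. However, the ``stronger statement'' you set out to prove by induction---that $\flatdim_R M \le n$ whenever $\Tor_n^R(k,M)=0$ for some $n\ge \dim R$---is false as soon as $\dim R\ge 1$. For instance, let $(R,\fm,k)$ be local with a non-maximal prime $\fq$ such that $R_\fq$ is not regular (e.g.\ $R=k[[x,y]]/(x^2)$ and $\fq=(x)$), and take $M=k(\fq)$. Any element of $\fm\setminus\fq$ acts invertibly on $M$, so $\fm M=M$ and hence $\Tor_i^R(k,M)=0$ for \emph{every} $i$; yet $\flatdim_R M \ge \flatdim_{R_\fq} k(\fq)=\infty$. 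Thus your base case $d=0$ is fine, but the inductive step cannot possibly go through, because its conclusion already fails for $d=1$. Even if you retreat to the actual conclusion of the theorem, the change-of-rings long exact sequence for $R\to R/xR$ does not transmit information in the direction you need: from a single vanishing $\Tor_n^R(k,M)=0$ you cannot extract a vanishing of $\Tor_{n'}^{R'}(k,-)$ on either $M/xM$ or $(0:_M x)$ separately, so the induction has no foothold.

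The paper avoids this by replacing $M$, after localizing at $\fp$, with its derived $\fm$-completion $\LLambda^\fm M$ (equivalently, by passing to the Matlis dual and using local cohomology). One has $\Tor_*^R(k,M)\cong \Tor_*^R(k,\LLambda^\fm M)$, and for a derived $\fm$-complete complex a single vanishing $\Tor_n^R(k,-)=0$ in the relevant range \emph{does} force $\flatdim_R(\LLambda^\fm M)<n$, via a Nakayama-type result of Simon for complete modules. The flat-dimension bound you were after holds for $\LLambda^\fm M$, not for $M$; completion erases exactly the ``away from $\fm$'' behaviour that produces the counterexample above.
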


\noindent As $\flatdim_RM < n$ holds if and only if one has
$\Tor_{i}^{R}(k(\fp),M)=0$ for all primes~$\fp$ and all $i\ge n$,
Theorem~\ref{thm:1} provides for an improvement of existing
characterizations of modules of finite flat dimension.

In parallel to the flat case, the injective dimension of an $R$-module
$M$ is less than~$n$ if $\Ext^{n}_R(R/\fp,M)=0$ for every prime ideal
$\fp$. Moreover the injective dimension can be detected by vanishing
\emph{locally} of cohomology with coefficients in residue fields. That
is, $\injdim_RM < n$ holds if and only if one has
$\Ext^{n}_{R_\fp}(k(\fp),M_\fp)=0$ for all primes~$\fp$. By the
standard isomorphisms $\Tor^R_*(k(\fp),M) \cong
\Tor^{R_\fp}_*(k(\fp),M_\fp)$ there is no local/global distinction for
Tor vanishing. The following consequence of
Proposition~\ref{prp:bass-finite} is, therefore, a perfect parallel to
Theorem~\ref{thm:1}.

\begin{theorem}
  \label{thm:2}
  Let $\fp$ be a prime ideal in a commutative noetherian ring $R$ and
  let $M$ be an $R$-module.  If one has
  $\Ext^{n}_{R_\fp}(k(\fp),M_\fp)=0$ for some integer $n \ge \dim
  R_\fp$, then $\Ext^i_{R_\fp}(k(\fp),M_\fp) = 0$ holds for all $i \ge
  n$.
\end{theorem}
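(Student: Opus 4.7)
My plan is to deduce the statement from the Tor-rigidity of Theorem~\ref{thm:1} by means of Matlis duality.

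First I would localize at $\fp$: since the hypothesis and conclusion involve only $R_\fp$ and $M_\fp$, I may replace $R$ by $R_\fp$ and $M$ by $M_\fp$, and assume that $(R,\fm,k)$ is a local ring with residue field $k=k(\fp)$ and Krull dimension $d=\dim R_\fp\le n$.

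Next I would set up the duality. Let $E=E_R(k)$ denote the injective envelope of $k$, write $(-)^\vee=\Hom_R(-,E)$, and choose a resolution $P_\bullet\to k$ in which each $P_i$ is a finitely generated free $R$-module --- possible because $k$ is finitely generated. For every $i$, the natural map
\[
\Hom_R\bigl(\Hom_R(P_i,M),E\bigr)\;\xrightarrow{\ \cong\ }\;P_i\otimes_R M^\vee
\]
is an isomorphism (since $P_i$ is finitely generated and free), and these assemble into an isomorphism of chain complexes. Because $E$ is injective, $(-)^\vee$ is exact on modules, so passing to (co)homology gives a natural isomorphism
\[
\Ext^{i}_R(k,M)^\vee\;\cong\;\Tor^R_i(k,M^\vee)
\]
for every $i\ge 0$. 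Since Matlis duality is faithful on $k$-vector spaces, vanishing of $\Ext^i_R(k,M)$ is equivalent to vanishing of $\Tor^R_i(k,M^\vee)$.

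Finally I would apply Theorem~\ref{thm:1} to the $R$-module $M^\vee$ at the maximal ideal $\fm$. The hypothesis $\Ext^{n}_{R_\fp}(k(\fp),M_\fp)=0$ becomes $\Tor^R_n(k,M^\vee)=0$, and since $n\ge d=\dim R$, Theorem~\ref{thm:1} propagates vanishing to $\Tor^R_i(k,M^\vee)=0$ for all $i\ge n$. Dualizing back yields the desired conclusion. The main technical point to verify is the Matlis-duality step above --- in particular that it applies to an arbitrary $M$, not just one with finiteness or reflexivity hypotheses --- but this is assured by the \emph{finite} generation of each $P_i$ (and hence of $k$), independently of any property of $M$.
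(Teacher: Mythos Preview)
Your argument is correct. The Matlis-duality isomorphism $\Ext^{i}_{R}(k,M)^{\vee}\cong \Tor^{R}_{i}(k,M^{\vee})$ holds exactly as you say, needing only that the terms of the free resolution of $k$ are finitely generated; faithfulness of $(-)^{\vee}$ on $k$-vector spaces then transfers the vanishing, and Theorem~\ref{thm:1} applied to the module $M^{\vee}$ finishes the job.

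The paper takes a different, more direct route: after localizing at $\fp$ it applies Proposition~\ref{prp:bass-finite} to the module $M$, using the bound $\sup\hh^{*}_{\fm}(M)\le \dim R$ from \eqref{eq:lc-dim}. It is worth noting that in the paper's internal logic, Theorem~\ref{thm:1} is itself proved (via Theorem~\ref{thm:fdim} and Proposition~\ref{prp:betti-finite}(2)) by Matlis-dualizing and invoking Proposition~\ref{prp:bass-finite}. So your route, once unfolded, applies Matlis duality twice and lands on Proposition~\ref{prp:bass-finite} for $M^{\vee\vee}$, whereas the paper applies that proposition once, directly to $M$. Your approach has the virtue of exhibiting Theorem~\ref{thm:2} as a formal dual of Theorem~\ref{thm:1}; the paper's route is shorter and also delivers the sharper statement $\sup\Ext^{*}_{R}(k,M)=\depth R-\width_{R}M$.
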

In contrast to the situation for Tor, the cohomology groups
$\Ext_{R_\fp}^*(k(\fp),M_\fp)$ and $\Ext_R^*(k(\fp),M)$ can be quite
different, and it was only proved recently, in
\cite[Theorem~1.1]{LWCSBI}, that the injective dimension of an
$R$-module can be detected by vanishing \emph{globally} of cohomology
with coefficients in residue fields. That is, $\injdim_RM < n$ holds
if and only if one has $\Ext^{i}_{R}(k(\fp),M)=0$ for all $i\ge n$ and
all primes~$\fp$. One advantage of this global vanishing criterion is
that it also applies to complexes of modules; per
Example~\ref{exa:injdimloc} the local vanishing criterion does
not. For the proof of the following rigidity result for
$\Ext_R^*(k(\fp),M)$, see Remark~\ref{rmk:injdim}.

\begin{theorem}
  \label{thm:3}
  Let $\fp$ be a prime ideal in a commutative noetherian ring $R$ and
  let $M$ be an $R$-module.  If one has $\Ext^{n}_{R}(k(\fp),M)=0$ for
  some integer $n \ge 2\dim R$, then $\Ext^{i}_{R}(k(\fp),M)=0$ holds
  for all $i \ge n$.
\end{theorem}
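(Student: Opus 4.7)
My plan is to reduce Theorem~\ref{thm:3} to the local rigidity embodied by Proposition~\ref{prp:bass-finite} (which underlies Theorem~\ref{thm:2}) via a change-of-rings argument. The factor $2$ in $n \ge 2\dim R$ should arise as $\dim R_\fp + \dim R$, where the first summand comes from the threshold for local rigidity and the second from the cohomological amplitude of the complex that replaces $M$ after the change of rings.

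The first step uses that $k(\fp)$ is naturally an $R_\fp$-module. The standard adjunction for the ring map $R \to R_\fp$ then yields
$$\RHom_R(k(\fp), M) \;\simeq\; \RHom_{R_\fp}\bigl(k(\fp), \RHom_R(R_\fp, M)\bigr).$$
Setting $N := \RHom_R(R_\fp, M)$, a complex of $R_\fp$-modules, one obtains $\Ext^i_R(k(\fp), M) \cong \Ext^i_{R_\fp}(k(\fp), N)$ for every $i$. Since $R_\fp$ is a flat $R$-module, the Gruson--Raynaud inequality gives $\projdim_R R_\fp \le \splf(R) \le \dim R$; consequently $H^q(N) = \Ext_R^q(R_\fp, M)$ vanishes outside the range $0 \le q \le \dim R$, so the cohomological amplitude of $N$ is at most $\dim R$.

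The second step applies (a complex version of) Proposition~\ref{prp:bass-finite} to $N$ over the local ring $R_\fp$. The expected statement is that $\Ext^i_{R_\fp}(k(\fp), N)$ is rigid for $i \ge \dim R_\fp + \sup\{q : H^q(N) \ne 0\}$; by the amplitude bound this threshold is at most $\dim R_\fp + \dim R \le 2\dim R$, exactly matching the hypothesis. Rigidity of $\Ext^i_{R_\fp}(k(\fp), N)$ then transports back to rigidity of $\Ext^i_R(k(\fp), M)$ via the isomorphism in the previous paragraph, yielding the theorem.

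The main obstacle is justifying the complex version of local rigidity. I expect this to proceed via the hyperhomology spectral sequence
$$E_2^{p,q} \;=\; \Ext^p_{R_\fp}\bigl(k(\fp), H^q(N)\bigr) \;\Longrightarrow\; \Ext^{p+q}_{R_\fp}(k(\fp), N),$$
combined with an induction on the cohomological amplitude of $N$ using the truncation triangles $\tau^{<Q} N \to N \to H^Q(N)[-Q] \to$. Theorem~\ref{thm:2} delivers rigidity row-by-row on the $E_2$-page, and the additional shift by $\sup\{q : H^q(N) \ne 0\}$ needed to propagate this rigidity from individual rows to the abutment is precisely what forces the bound $2\dim R$ rather than $\dim R + \dim R_\fp$.
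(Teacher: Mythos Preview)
Your reduction is exactly the paper's: the adjunction
\[
\Ext^*_R(k(\fp),M)\cong \Ext^*_{R_\fp}(k(\fp),N)\quad\text{with}\quad N=\RHom_R(R_\fp,M),
\]
together with $\projdim_R R_\fp\le \splf R\le \dim R$ to bound $\sup\hh^*(N)$, is precisely the argument in the proof of Theorem~\ref{thm:injdim}, from which Remark~\ref{rmk:injdim} extracts Theorem~\ref{thm:3}.

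Where you diverge is in treating the ``complex version of local rigidity'' as an obstacle. It is not: Proposition~\ref{prp:bass-finite} is already stated for an arbitrary $R$-complex, and its hypothesis is $n\ge \sup\hh^*_{\fp R_\fp}(N)$, not $n\ge \dim R_\fp+\sup\hh^*(N)$. The bridge between the two is simply the first inequality in \eqref{eq:lc-dim},
\[
\sup\hh^*_{\fp R_\fp}(N)\;\le\;\dim R_\fp+\sup\hh^*(N),
\]
which delivers your ``expected statement'' in one line. No spectral sequence or truncation induction is needed.

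Your proposed workaround via the hypercohomology spectral sequence in fact has a gap. From the single vanishing $\Ext^n_{R_\fp}(k(\fp),N)=0$ at the abutment you cannot extract vanishing of any individual term $E_2^{p,q}=\Ext^p_{R_\fp}(k(\fp),H^q(N))$, so there is no way to trigger the module-level rigidity of Theorem~\ref{thm:2} on the rows; the truncation triangles have the same defect. The whole point of Proposition~\ref{prp:bass-finite} is that it handles the complex directly, by passing to $\RGamma_{\fp R_\fp}N$ and arguing with the minimal semi-injective resolution (via Lemma~\ref{lem:bass-finite}), thereby sidestepping any devissage on the cohomology of $N$.
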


The case when $\fp$ is the maximal ideal of a local ring merits
comment, for the bound on $n$ in Theorems \ref{thm:2} and \ref{thm:3}
differs by a factor of~$2$. The proof shows that it is sufficient to
require $n \ge \dim R_\fp + \projdim_R R_\fp$ in Theorem~\ref{thm:3},
and that aligns the two bounds in this special case.  For a general
prime $\fp$ however the number $\projdim R_\fp$ may depend on the
Continuum Hypothesis; see Osofsky \cite{BLO68}.

In this introduction, we have focused on results that deal with
rigidity of the Tor and Ext functors. In the text, we also establish
results that track where vanishing of these functors starts, when
indeed they vanish eventually.

\begin{center}
  $\ast \ \ \ast \ \ \ast$
\end{center}
\noindent
Throughout $R$ will be a commutative noetherian ring.  Background
material on homological invariants and local (co)homology is recalled
in Section~\ref{sec:prelim}. Rigidity results for Ext and Tor over
local rings are proved in Section~\ref{sec:local}, and applications to
homological dimensions follow in
Sections~\ref{sec:flatdim}--\ref{sec:injdim}. The final section
explores, by way of examples, the complicated nature of injective
dimension of unbounded complexes.

\section{Local homology and local cohomology} 
\label{sec:prelim}

Our standard reference for definitions and constructions involving
complexes is \cite{LLAHBF91}.  We will be dealing with graded modules
whose natural grading is the upper one and also those whose natural
grading is the lower one. Therefore, we set
\begin{equation*}
  \inf \hh_{*}(M) := \inf\{i\mid \hh_{i}(M)\ne 0\} \quad\text{and}\quad 
  \inf \hh^{*}(M) := \inf\{i\mid \hh^{i}(M)\ne 0\}
\end{equation*}
for an $R$-complex $M$, and analogously we define $\sup \hh_{*}(M)$
and $\sup\hh^{*}(M)$. We often work in the derived category of
$R$-modules, and write $\simeq$ for isomorphisms there. A morphism
between complexes is a \emph{quasi-isomorphism} if it is an
isomorphism in homology; that is to say, if it becomes an isomorphism
in the derived category.

The next paragraphs summarize the definitions and basic results on
local cohomology and local homology, following \cite{AJL-97,JLi02}.

\subsection*{Local (co)homology}
\label{ss:lc}

Let $R$ be a commutative noetherian ring and $\fa$ an ideal in
$R$. The right derived functor of the $\fa$-torsion functor
$\varGamma_\fa$ is denoted $\RGamma_\fa$, and the \emph{local
  cohomology} supported on $\fa$ of an $R$-complex $M$ is the graded
module
\begin{equation*}
  \hh^*_\fa(M) := \hh^*(\RGamma_\fa(M))\,.
\end{equation*}
There is a natural morphism $\RGamma_\fa(M)\to M$ in the derived
category; $M$ is said to be \emph{derived $\fa$-torsion} when this map
is an isomorphism. This is equivalent to the condition that
$\hh^{*}(M)$ is degreewise $\fa$-torsion; see
\cite[Proposition~6.12]{DG}.

The left derived functor of the $\fa$-adic completion functor
$\varLambda^\fa$ is denoted $\LLambda^\fa$ and the \emph{local
  homology} of $M$ supported on $\fa$ is the graded module
\begin{equation*}
  \hh_*^\fa(M) := \hh_*(\LLambda^\fa(M))\,.
\end{equation*}
There is a natural morphism $M\to \LLambda^{\fa}(M)$ in the derived
category and we say $M$ is \emph{derived $\fa$-complete} when this map
is an isomorphism. This is equivalent to the condition that for each
$i$, the natural map $\hh^{i}(M)\to \hh_{0}^{\fa}(\hh^{i}(M))$ is an
isomorphism; see \cite[Proposition~6.15]{DG}.

The morphisms $\RGamma_\fa(M)\to M$ and $M\to \LLambda^{\fa}(M)$
induce isomorphisms
\begin{equation}
  \label{eq:lc-Tor}
  \begin{aligned}
    \Ext^*_R(R/\fa,M) &\cong \Ext^*_R(R/\fa,\RGamma_\fa M)
    \quad\text{and}\\ \Tor^R_{*}(R/\fa,M) &\cong
    \Tor^{R}_{*}(R/\fa,\LLambda^\fa M)\,.
  \end{aligned}
\end{equation}

Indeed, the first one holds because the functor $\RGamma_{\fa}$ is
right adjoint to the inclusion of the $\fa$-torsion complexes (that is
to say, complexes whose cohomology is $\fa$-torsion) into the derived
category of $R$; see \cite[Proposition 3.2.2]{JLi02}. Thus one has
\begin{equation}
  \label{eq:k}
  \RHom(R/\fa,\RGamma_{\fa} M)  \xra{\ \simeq\ } \RHom(R/\fa,M)\:,
\end{equation}
and this gives the first isomorphism.  As to the second isomorphism,
consider the commutative square in the derived category
\begin{equation*}
  \xymatrix{
    \RGamma_{\fa} M \ar@{->}[r] \ar@{->}[d]_{\simeq}  & M \ar@{->}[d] \\
    \RGamma_{\fa} \LLambda^{\fa} M\ar@{->}[r] & \LLambda^{\fa} M }
\end{equation*}
induced by the vertical morphism on right. The isomorphism on the left
is part of Greenlees--May duality; see, for example, \cite[Corollary
(5.1.1)]{AJL-97}.  Applying the functor $R/\fa \Ltensor_{R}(-)$ yields
the commutative square
\begin{equation*}
  \xymatrix{
    R/\fa \Ltensor_{R} \RGamma_{\fa} M \ar@{->}[r]^{\simeq} \ar@{->}[d]_{\simeq}  
    & R/\fa \Ltensor_{R} M \ar@{->}[d] \\
    R/\fa \Ltensor_{R} \RGamma_{\fa} \LLambda^{\fa}
    M\ar@{->}[r]_{\simeq} & R/\fa \Ltensor_{R} \LLambda^{\fa} M }
\end{equation*}
The horizontal maps are isomorphisms by \eqref{eq:k} and
\cite[Proposition 6.5]{DG}. Thus the vertical map on the right is also
an isomorphism; in homology, this is the desired isomorphism.

Let $C(\fa)$ denote the \v{C}ech complex on a set of elements that
generate $\fa$. The values of the functors $\LLambda^\fa$ and
$\RGamma_\fa$ on an $R$-complex $M$ can then be computed as
\begin{equation*}
  \LLambda^\fa(M) = \RHom_R(C(\fa),M)\quad\text{and}\quad
  \RGamma_\fa(M) = C(\fa) \Ltensor_R M\,.
\end{equation*}
See for example \cite[Theorem (0.3) and Lemma (3.1.1)]{AJL-97}.

\subsection*{Depth and width} 
\label{ss:dw}

In the remainder of this section $(R, \fm, k)$ will be a local
ring. This means that $R$ is a commutative noetherian ring with unique
maximal ideal $\fm$ and residue field $k := R/\fm$.

The \emph{depth} and \emph{width} of an $R$-complex $M$ are defined as
follows:
\begin{equation*}
  \depth_R M = \inf \Ext^*_R(k,M) \quad\text{and}\quad  
  \width_R M = \inf \Tor^R_*(k,M)\,.
\end{equation*}
One has $\depth_RM \ge \inf\hh^*(M)$ and if $i =\inf\hh^*(M)$ is
finite, then equality holds if and only if $\Hom_R(k,\hh^i(M))\ne
0$. Similarly, one has $\width_RM \ge \inf\hh_*(M)$ and if
$j=\inf\hh_*(M)$ is finite, then equality holds if and only if
$k\otimes_R \hh_j(M)\ne 0$.

If $\flatdim_R M$ is finite, then one has an equality
\begin{equation}
  \label{eq:AB-formula}
  \depth_{R}M = \depth R - \sup \Tor^{R}_{*}(k,M)\,.
\end{equation}
This is an immediate consequence of \cite[Lemma 4.4(F)]{LLAHBF91}. For
finitely generated modules it is the Auslander--Buchsbaum Formula.

Similarly, if $\injdim_R M$ is finite, then one has
\begin{equation}
  \label{eq:Bass-formula}
  \width_{R}M = \depth R - \sup \Ext_{R}^{*}(k,M)\,.
\end{equation}
This is a consequence of \cite[Lemma 4.4(I)]{LLAHBF91}. For finitely
generated modules the equality above yields Bass' formula $\injdim_RM
= \depth R$.

From \cite[Definitions~2.3 and 4.3]{HBFSIn03} one gets that the depth
and width of an $R$-complex can be detected by vanishing of local
(co)homology:
\begin{equation*}
  \depth_RM = \inf\hh^*_\fm(M) \quad\text{and}\quad 
  \width_RM = \inf\hh_*^\fm(M)\,.
\end{equation*}
Combining this with \eqref{eq:lc-Tor} and the isomorphisms
\begin{equation*}
  \RGamma_\fm\LLambda^\fm(M) \simeq \RGamma_\fm(M)
  \quad\text{and} \quad
  \LLambda^\fm\RGamma_\fm(M) \simeq \LLambda^\fm(M)
\end{equation*}
from \cite[Corollary (5.1.1)]{AJL-97} one gets equalities
\begin{align}
  \label{eq:lc-depth}
  \depth_R \RGamma_\fm M &= \depth_R M = \depth_R \LLambda^\fm M \\
  \label{eq:lc-width}
  \width_R \RGamma_\fm M &= \width_R M = \width_R \LLambda^\fm M\,.
\end{align}

For later use, we note that for each $R$-complex $M$ there are
inequalities
\begin{equation}
  \label{eq:lc-dim}
  \sup\hh^{*}_{\fm}(M) \le \dim R + \sup \hh^{*}(M)\quad \text{and}\quad 
  \sup\hh^{*}_{\fm}(M) \le \sup \Ext^*_R(k,M)\,.
\end{equation}
The first is immediate as one has $\hh^*_\fm(M) = \hh^*(C(\fm)
\Ltensor_R M)$ where $C(\fm)$ is the \v{C}ech complex on a system of
parameters for $R$; the second is immediate once one recalls the
isomorphism $\hh^*_\fm(M) \cong \varinjlim_i\Ext^*_R(R/\fm^i,M)$.

The next result is a direct extension of \cite[Proposition~2.1]{AMS90}
by A.-M.\ Simon. Concerning the last assertion:
$\Tor^R_{i}(R,M)=\hh_{i}(M) \ne 0$, so $n$ cannot equal
$\sup\hh_*(M)$, unless both are infinite. However, for later
applications it is convenient to have the statement in this form.

\begin{lemma}
  \label{lem:complete-tor}
  Let $M$ be a derived $\fa$-complete $R$-complex with $\inf\hh_*(M) >
  -\infty$ and $n$ an integer. If $\Tor^R_{n}(R/\fp,M)=0$ all prime
  ideals $\fp \supseteq \fa$, then $\Tor^R_{n}(-,M)=0$.
  
  When in addition $n\ge \sup \hh_{*}(M)$, one has $\flatdim_{R}M\le
  n-1$.
\end{lemma}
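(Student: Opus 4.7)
The plan is to first establish $\Tor_n^R(-,M) = 0$ on $R$-modules, then deduce the flat-dimension bound. Since $\Tor_n^R(-,M)$ is additive and commutes with filtered colimits, vanishing on finitely generated modules suffices; prime filtration together with the long exact sequence of $\Tor$ reduces the task to showing $\Tor_n^R(R/\fp, M) = 0$ for every prime $\fp$ of $R$. The hypothesis covers $\fp \supseteq \fa$, so the content is to handle primes with $\fa \not\subseteq \fp$.

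For such a prime I would proceed by noetherian induction on $\dim R/\fp$. Choose $a \in \fa \setminus \fp$ (a nonzerodivisor on the domain $R/\fp$) and consider
\[
0 \lra R/\fp \xra{a} R/\fp \lra R/(\fp + aR) \lra 0.
\]
Since $V(\fp + aR) \subsetneq V(\fp)$, the inductive hypothesis together with the prime filtration of $R/(\fp + aR)$ yield $\Tor_n^R(R/(\fp + aR), M) = 0$. The long exact sequence of $\Tor$ then forces multiplication by $a$ to be surjective on $T := \Tor_n^R(R/\fp, M)$; as $a \in \fa$, this gives $\fa T = T$.

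To conclude $T = 0$ I would invoke Nakayama for derived $\fa$-complete modules, which reduces the matter to verifying that $T$ is itself derived $\fa$-complete. I expect this to be the main technical step: each $\hh_q(M)$ is derived $\fa$-complete by hypothesis, and since $R/\fp$ admits a resolution by finite-rank free $R$-modules and derived $\fa$-complete modules form an abelian subcategory closed under finite direct sums, kernels, and cokernels, each $\Tor_p^R(R/\fp, \hh_q(M))$ is derived $\fa$-complete. The Grothendieck spectral sequence $\Tor_p^R(R/\fp, \hh_q(M)) \Rightarrow \Tor_{p+q}^R(R/\fp, M)$ then transfers this completeness to the abutment, its filtration at each total degree being finite on account of $\inf \hh_*(M) > -\infty$; Nakayama now applies and yields $T = 0$, closing the induction.

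For the concluding bound, once $\Tor_n^R(-, M) = 0$ and $n \ge \sup \hh_*(M)$, the standard dimension-shift argument propagates Tor-vanishing to all degrees $i \ge n$. Truncating a flat resolution $F \to M$ at homological degree $n - 1$ then produces a flat complex of length at most $n - 1$ quasi-isomorphic to $M$, so $\flatdim_R M \le n - 1$.
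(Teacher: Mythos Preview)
Your proposal is correct and follows the same overall strategy as the paper: reduce to showing $\Tor^R_n(R/\fp,M)=0$ for every prime $\fp$, handle primes $\fp\not\supseteq\fa$ via the short exact sequence $0\to R/\fp\xra{a}R/\fp\to R/(\fp+aR)\to 0$ with $a\in\fa\setminus\fp$, and conclude by a Nakayama-type argument. The paper organizes the reduction as a maximal-counterexample argument rather than your noetherian induction, but that is cosmetic.

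The one substantive difference is in how the Nakayama step is executed. The paper computes $\Tor^R_*(L,M)$ as the homology of $F\otimes_R\varLambda^{\fa}G$, where $F$ is a finite free resolution of $L$ and $G$ a bounded-below semi-flat resolution of $M$; this is a complex of honestly $\fa$-adically complete modules, and the vanishing then follows from Simon's \cite[Proposition~1.4]{AMS90}. You instead show directly that $\Tor^R_n(R/\fp,M)$ is derived $\fa$-complete, via the hyperhomology spectral sequence and the fact that derived $\fa$-complete modules form an abelian subcategory, and then invoke the derived-complete Nakayama lemma. Your route is more abstract and avoids choosing explicit resolutions; the paper's is more elementary in that it stays with classical completions and a concrete complex. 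Both are valid, and each has its appeal.
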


\begin{proof}
  First we claim that for any finitely generated $R$-module $L$ and
  integer $i$, if $\fa \Tor^{R}_{i}(L,M)=\Tor^{R}_{i}(L,M)$, then
  $\Tor^{R}_{i}(L,M)=0$. Indeed, let $F$ be a free resolution of $L$
  with each $F_{i}$ finitely generated and equal to zero for $i<0$.
  Let $G$ be a semi-flat resolution of $M$ with $G_{i}=0$ for $i\ll
  0$; this is possible as $\inf\hh_*(M)$ is finite. Since $M$ is
  derived $\fa$-complete, the complex $\varLambda^{\fa}G$, which
  computes $\LLambda^\fa M$, is quasi-isomorphic to $M$. Thus
  \begin{equation*}
    \Tor^{R}_{i}(L,M)=\hh_{i}(F\otimes_{R} \varLambda^{\fa}G)\,.
  \end{equation*}
  Note that $F\otimes_{R}\varLambda^{\fa}G$ is a complex of
  $\fa$-adically complete $R$-modules; this is where we need that each
  $F_{i}$ is finitely generated and that $F_{i}$ and $G_{i}$ are zero
  for $i\ll 0$. It remains to apply \cite[Proposition~1.4]{AMS90}.

  For the stated result, it suffices to prove that the set
  \begin{equation*}
    \{\fb\subset R \text{ an ideal}\mid \Tor^{R}_{n}(R/\fb,M)\ne 0\}
  \end{equation*}
  is empty. Suppose it is not. Pick a maximal element; say, $\fq$. We
  claim that this is a prime ideal. The argument is standard (see, for example, \cite[2.4]{TYLMLR08}) and goes as follows: If it is not, let $\fq'$
  be an associated prime ideal of $R/\fq$, and $x\in R$ an element
  such that $\fq'=\{r\in R\mid xr \in \fq\}$. Then yields an exact
  sequence
  \[
  0\lra R/\fq' \xra{1\mapsto x} R/\fq \lra R/((x)+\fq) \lra 0
  \]
  Since both $\fq'$ and $(x)+\fq$ strictly contain $\fq$, one obtains
  that
  \[
  \Tor^{R}_{n}(R/\fq',M) = 0 = \Tor^{R}_{n}(R/((x)+\fq),M)\,,
  \]
  and hence $\Tor^{R}_{n}(R/\fq,M)=0$, contradicting the choice of
  $\fq$. Thus $\fq$ is prime.

  By hypothesis, $\fq$ does not contain $\fa$, so choose an element
  $a$ in $\fa$ but not in $\fq$ and consider the exact sequence
  \begin{equation*}
    0\lra R/\fq \xra{\ a\ } R/\fq \lra R/((a) + \fq) \lra 0
  \end{equation*}
  Noting that $\Tor^{R}_{n}(R/((a)+\fq),M)=0$ by the choice of $\fq$,
  it follows from the exact sequence above that the map
  $\Tor^{R}_{n}(R/\fq,M)\xra{a} \Tor^{R}_{n}(R/\fq,M)$ is
  surjective. By the claim in the first paragraph, this implies that
  $\Tor^{R}_{n}(R/\fq,M)=0$, which is a contradiction.
\end{proof}

\section{Local rings} 
\label{sec:local}

In this section $(R,\fm,k)$ is a local ring.  Note from
\eqref{eq:lc-dim} that in the next statement $n$ cannot equal
$\sup\hh^*(M)$, but, as with Lemma~\ref{lem:complete-tor}, this
formulation is convenient for later applications.

\begin{lemma}
  \label{lem:bass-finite}
  Let $M$ be a derived $\fm$-torsion $R$-complex with $\inf\hh^*(M) >
  -\infty$. If one has $\Ext^{n}_{R}(k,M)=0$ for some integer $n \ge
  \sup\hh^*(M)$, then $\injdim_R M \le n-1$.
\end{lemma}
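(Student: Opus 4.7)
The plan is to pass to a minimal semi-injective resolution $M \xra{\simeq} I$, which exists because $M$ is cohomologically bounded below (by the hypothesis $\inf\hh^*(M)>-\infty$), and then to exploit the drastic structural restrictions that the derived $\fm$-torsion hypothesis places on $I$.

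First I would observe that since $\hh^*(M)$ is $\fm$-torsion, localization at any prime $\fp\ne\fm$ kills $M$ in the derived category: $M_\fp\simeq 0$. Consequently the Bass numbers $\mu^i_R(\fp,M):=\dim_{k(\fp)}\Ext^i_{R_\fp}(k(\fp),M_\fp)$ vanish for every $i$ whenever $\fp\ne\fm$. The standard structure of a minimal semi-injective resolution over a noetherian local ring then gives $I^i\cong E(k)^{(\mu^i)}$ for each $i$, where $\mu^i:=\mu^i_R(\fm,M)$. Moreover, minimality of $I$ forces the differential on $\Hom_R(k,I)$ to vanish, so that $\mu^i=\dim_k\Ext^i_R(k,M)$.

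With that setup in hand, the hypothesis $\Ext^n_R(k,M)=0$ immediately translates to $\mu^n=0$, hence $I^n=0$. I would then propagate this upward by induction on $i\ge n$. For the inductive step, suppose $I^{i-1}=0$ with $i>n$. Since $I^{i-1}=0$ one has $\hh^i(I)=\ker(d^i)$; and since $i>n\ge\sup\hh^*(M)$ one has $\hh^i(I)=\hh^i(M)=0$; combining, $\ker(d^i)=0$. By minimality of $I$ the socle of $I^i$ sits inside $\ker(d^i)$, hence is zero; but a nonzero coproduct of copies of $E(k)$ must have nonzero socle, so $I^i=0$. The resulting vanishing $I^i=0$ for every $i\ge n$ is exactly $\injdim_R M\le n-1$.

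The main obstacle is not mathematical but bibliographic: one has to cite cleanly the structure theory for minimal semi-injective resolutions over a noetherian local ring, in particular that such a resolution exists for cohomologically bounded-below complexes, that it decomposes as $I^i\cong\bigoplus_\fp E(R/\fp)^{(\mu^i_R(\fp,M))}$, and that minimality forces the socles into the kernels of the differentials. Once those facts are in place, the short induction above delivers the conclusion.
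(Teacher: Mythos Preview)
Your proposal is correct and follows essentially the same route as the paper: take the minimal semi-injective resolution $I$, use the derived $\fm$-torsion hypothesis to see that each $I^i$ is a direct sum of copies of $E(k)$, deduce $I^n=0$ from $\Ext^n_R(k,M)=\Hom_R(k,I^n)=0$, and then push the vanishing upward using minimality and $n\ge\sup\hh^*(M)$. The paper compresses your inductive step into the single clause ``from the assumption on $n$ and minimality of $I$ it follows that $I^i=0$ for all $i\ge n$'', but the content is the same.
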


\begin{proof}
  Let $I$ be the minimal semi-injective resolution of $M$. One has
  \begin{equation*}
    \Ext^{n}_{R}(k,M) = \hh^{n}(\Hom_{R}(k,I)) 
    = \Hom_{R}(k,I^{n})\,.
  \end{equation*}
  As $M$ is derived $\fm$-torsion, each module $I^i$ is a direct sum
  of copies of the injective envelope of $k$, so $\Ext^{n}_{R}(k,M)=0$
  implies $I^{n}=0$. It follows from the assumption on $n$ and
  minimality of $I$ that $I^i=0$ holds for all $i \ge n$; in
  particular, one has $\injdim_R M \le n-1$.
\end{proof}

The result below extends \eqref{eq:Bass-formula}; its proof would be
significantly shorter under the additional hypothesis that $\inf
\hh^{*}_{\fm}(M)$ is finite.

\begin{proposition}
  \label{prp:bass-finite}
  Let $M$ be an $R$-complex. If\, $\Ext^{n}_{R}(k,M)=0$ holds for some
  integer $n\ge \sup \hh^{*}_{\fm}(M)$, then one has
  $\Ext^{i}_{R}(k,M)=0$ for all $i \ge n$ and
  \begin{equation*}
    \sup\Ext^{*}_{R}(k,M) = \depth R - \width_{R} M\,.
  \end{equation*}
\end{proposition}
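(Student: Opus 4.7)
The plan is to reduce to Lemma~\ref{lem:bass-finite} by replacing $M$ with $N := \RGamma_\fm M$. The first isomorphism in \eqref{eq:lc-Tor} gives $\Ext^*_R(k,M) \cong \Ext^*_R(k,N)$, while $N$ is derived $\fm$-torsion and satisfies $\sup \hh^*(N) = \sup \hh^*_\fm(M) \le n$. So the hypothesis translates to $\Ext^n_R(k,N) = 0$ with $n \ge \sup \hh^*(N)$, and it suffices to show $\injdim_R N \le n-1$: the vanishing conclusion is then immediate, while the supremum formula follows by applying the Bass formula \eqref{eq:Bass-formula} to $N$ (which now has finite injective dimension) and combining it with $\width_R N = \width_R M$ from \eqref{eq:lc-width} and $\Ext^*_R(k,N) = \Ext^*_R(k,M)$.

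When $\inf \hh^*(N) > -\infty$—the easy case foreshadowed by the remark preceding the proposition—the complex $N$ is cohomologically bounded, so Lemma~\ref{lem:bass-finite} applies directly to yield $\injdim_R N \le n-1$, and we are done.

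The main obstacle is the unbounded-below case $\inf \hh^*(N) = -\infty$, where the minimal semi-injective resolution invoked in Lemma~\ref{lem:bass-finite} is not immediately available. Here I would first produce a semi-injective resolution $J^\bullet$ of $N$ in which each $J^i$ is a direct sum of copies of $E(k)$; this is possible because $N$ is derived $\fm$-torsion, e.g., by applying $\varGamma_\fm$ termwise to a semi-injective resolution. One then wants $J^\bullet$ to be \emph{minimal}, in the sense that the induced differentials on $\Hom_R(k,J^\bullet)$ vanish. With such a $J^\bullet$ in hand, the proof of Lemma~\ref{lem:bass-finite} transfers essentially verbatim: $\Ext^n_R(k,N)=0$ together with $\Hom_R(k,E(k))=k$ forces $J^n=0$, and inductively for each $i>n$ the exactness of $J^\bullet$ above $\sup\hh^*(N)$ makes $d^i$ injective on the socle, which minimality forces to be zero, whence $J^i=0$. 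Establishing a workable minimality property for resolutions with unbounded-below tail—so that this induction actually runs—is the technical crux; once it is secured, $\injdim_R N \le n-1$ is obtained and the rest of the argument proceeds as in the bounded case.
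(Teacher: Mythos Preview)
Your overall plan---pass to $N=\RGamma_\fm M$, establish $\injdim_R N\le n-1$, and read off both conclusions from \eqref{eq:Bass-formula} and \eqref{eq:lc-width}---is sound, and your route to the supremum formula via the Bass formula applied to $N$ is in fact more direct than the paper's Koszul-complex computation. The gap is exactly where you flag it: in the unbounded-below case you propose to ``apply $\varGamma_\fm$ termwise to a semi-injective resolution'' to produce a semi-injective resolution of $N$ with $\fm$-torsion terms, but for an unbounded complex of injectives that operation preserves injectivity of the terms, not K-injectivity of the complex. So you do not yet have the resolution on which to run your socle induction, and you leave this ``technical crux'' unresolved.

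The paper bypasses this entirely. It takes $J$ to be the minimal semi-injective resolution of $M$ and sets $I=\varGamma_\fm J$, but never claims $I$ is semi-injective. All that is used is that $\Ext^i_R(k,M)=\Hom_R(k,J^i)=\Hom_R(k,I^i)$ (minimality of $J$), that each $I^i$ is a sum of copies of $E(k)$, and that $\hh^i(I)=\hh^i_\fm(M)$. Choosing $d$ with $\sup\hh^*_\fm(M)\le d\le n$, the truncation $0\to Z\to I^d\to I^{d+1}\to\cdots$ with $Z=\zz^d(I)$ is a bona fide injective resolution of the $\fm$-torsion \emph{module} $Z$, so Lemma~\ref{lem:bass-finite} applies directly to $Z$ and gives the vanishing. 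Observe, incidentally, that your socle induction could equally be run on $I$ itself: once it shows $I^i=0$ for all $i\ge n$, the complex $I$ is bounded above and hence semi-injective \emph{a posteriori}, whence $\injdim_R N\le n-1$ and your Bass-formula argument goes through. The insight you are missing is that semi-injectivity of the $\fm$-torsion complex need not be an input to the argument---it can be a conclusion.
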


\begin{proof}
  Let $J$ be the minimal semi-injective resolution of $M$ and set
  $I:=\varGamma_{\fm}(J)$. For every integer $i$ one has
  \begin{equation*}
    \hh^{i}_{\fm}(M) = \hh^{i}(I) \quad\text{and}\quad \Ext^i_R(k,M) =
    \Hom_R(k,J^i) = \Hom_R(k,I^i)\,.
  \end{equation*}
  Let $d$ be any integer with $n \ge d \ge \sup \hh^{*}_{\fm}(M)$ and
  set $Z := \zz^d(I)$, the submodule of cycles in degree
  $d$. Vanishing of $\hh^{i}_{\fm}(M)$ and the identifications above
  yield
  \begin{equation*}
    \Ext_R^i(k,M) \cong \Ext_R^{i-d}(k,Z)\quad\text{for all $i\ge d\,.$}
  \end{equation*}
  In particular, one has $\Ext_R^{n-d}(k,Z) =0$. Since $Z$ is an
  $\fm$-torsion $R$-module, Lemma \ref{lem:bass-finite} and the
  isomorphisms above yield $\Ext^{i}_{R}(k,M)=0$ for all $i \ge n$.

  It remains to verify the claim about the supremum of
  $\Ext^{*}_{R}(k,M)$. To this end, let $K$ be the Koszul complex on a
  minimal set of generators for $\fm$. It follows from
  \cite[Definition 2.3]{HBFSIn03} that one has the second equivalence
  below
  \begin{alignat*}{2}
    \sup \Ext^{*}_{R}(k,M) = - \infty
    & \iff & &\Ext^{*}_{R}(k,M)=0 \\
    &\iff  & &\hh^{*}(K\otimes_{R}M) = 0 \\
    &\iff & &\width_{R}M = \infty\:.
  \end{alignat*}
  The first one is by definition while the last one is by
  \cite[Theorem 4.1]{HBFSIn03}. We may thus assume that
  $s:=\sup\Ext^*_R(k,M)$ and $w := \sup\hh^*(M \otimes_R K)$ are
  integers. Because $K$ is a bounded complex of finitely generated
  free $R$-modules, there is a quasi-isomorphism
  \begin{equation*}
    \RHom_R(k,M)\Ltensor_R K \simeq \RHom(k,M \otimes_R K)\,.
  \end{equation*}
  From this and the fact that $\Ext^*_R(k,M)$ is a graded $k$-vector
  space, it follows that
  \begin{equation*}
    s = \sup\hh^*(\RHom_R(k,M)\Ltensor_R K)= \sup\Ext^{*}_{R}(k,M\otimes_R K)\,.
  \end{equation*} 
  Let $E$ be the minimal injective resolution of $M\otimes_R K$. Since
  $M\otimes_{R}K$ is derived $\fm$-torsion, one has
  $\varGamma_{\fm}E=E$. From $\Ext^{*}_{R}(k,M\otimes_R K) =
  \Hom_R^*(k,E)$ it thus follows that $E^s \ne 0$ and $E^i=0$ for all
  $i > s$. On the other hand, as $w=\sup \hh^*(E)$ the complex $E^{\ge
    w}$ is the minimal injective resolution of the module $W :=
  \zz^w(E)$ of cycles in degree $w$, so one has $\injdim_R W = s-w$.
  
  It remains to show that $\injdim_{R}W=\depth R$.  Evidently one has
  $\injdim_R W = \sup\Ext^*_R(k,W)$, so by \eqref{eq:Bass-formula} it
  suffices to show that $W$ has width $0$, that is to say, that
  $k\otimes_R W \ne 0$. But this is clear because $\hh^w(E)$ is
  nonzero and annihilated by $\fm$, whence $\fm W \subseteq \bb^w(E)
  \subsetneq W$.
\end{proof}

\begin{proposition}
  \label{prp:betti-finite}
  Let $M$ be an $R$-complex with $\Tor_{n}^{R}(k,M)=0$ for some
  integer~$n$, and assume that one of the following conditions is
  satisfied:
  \begin{enumerate}[\quad\rm(1)]
  \item $n \ge \sup\hh^{\fm}_{*}(M)$ and $\inf\hh_*(M) > -\infty$;
  \item $n\ge \sup \hh^{*}_{\fm}(\Hom_{R}(M,E(k)))$ where $E(k)$ is
    the injective envelope of $k$.
  \end{enumerate}
  One then has $\Tor_i^R(k,M) = 0$ for all $i \ge n$ and
  \begin{equation*}
    \sup\Tor_{*}^{R}(k,M) = \depth R - \depth_{R} M\,.
  \end{equation*}
\end{proposition}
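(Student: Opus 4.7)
The plan is to handle the two cases by separate reductions: case (1) by passing to the $\fm$-adic completion and invoking Lemma~\ref{lem:complete-tor}, and case (2) by passing to the Matlis dual and invoking Proposition~\ref{prp:bass-finite}.

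For part (1), I would set $N:=\LLambda^\fm M$. By \eqref{eq:lc-Tor} one has $\Tor^R_*(k,N)\cong \Tor^R_*(k,M)$, so $\Tor^R_n(k,N)=0$; also $\sup\hh_*(N)=\sup\hh_*^\fm(M)\le n$. Representing $\LLambda^\fm M$ as $\Hom_R(C(\fm),J)$ for a bounded-below semi-injective resolution $J$ of $M$ yields $\inf\hh_*(N)\ge \inf\hh_*(M)>-\infty$ by a simple degree count. Since $N$ is derived $\fm$-complete and the only prime ideal containing $\fm$ is $\fm$ itself, Lemma~\ref{lem:complete-tor} applies directly to $N$ and gives $\flatdim_R N\le n-1$. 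The vanishing of $\Tor^R_i(k,M)$ for $i\ge n$ then transfers back via \eqref{eq:lc-Tor}, and the formula for $\sup\Tor^R_*(k,M)$ follows by applying \eqref{eq:AB-formula} to $N$ and identifying $\depth_R N=\depth_R M$ via \eqref{eq:lc-depth}.

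For part (2), I would set $M^\vee:=\Hom_R(M,E(k))$. Hom--tensor adjunction provides an isomorphism
\begin{equation*}
\RHom_R(k,M^\vee)\simeq \RHom_R(k\Ltensor_R M,E(k))\,,
\end{equation*}
which upon taking cohomology yields $\Ext_R^i(k,M^\vee)\cong \Hom_k(\Tor^R_i(k,M),k)$; in particular $\Ext_R^n(k,M^\vee)=0$. Combined with the hypothesis $n\ge \sup\hh^*_\fm(M^\vee)$, Proposition~\ref{prp:bass-finite} applied to $M^\vee$ then delivers $\Ext_R^i(k,M^\vee)=0$ for all $i\ge n$, which dualizes to the desired Tor vanishing, together with $\sup\Ext_R^*(k,M^\vee)=\depth R-\width_R M^\vee$. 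For the formula I need the companion identification $\width_R M^\vee=\depth_R M$; this follows by applying the tensor--hom swap levelwise against a resolution of $k$ by finitely generated free $R$-modules, producing $k\Ltensor_R M^\vee\simeq \RHom_R(\RHom_R(k,M),E(k))$ and hence $\Tor^R_i(k,M^\vee)\cong \Hom_k(\Ext_R^i(k,M),k)$.

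The main technical obstacle will be setting up the swap isomorphism used for the identification $\width_R M^\vee=\depth_R M$: although formal, it must be executed carefully for unbounded $M$, using that $k$ admits a resolution by finitely generated free $R$-modules so that the swap applies levelwise. The degenerate case $\sup\Tor^R_*(k,M)=-\infty$ (equivalently $\Tor^R_*(k,M)=0$) must also be treated separately, exactly as in Proposition~\ref{prp:bass-finite}, invoking the Koszul characterization of width to conclude $\depth_R M=\infty$.
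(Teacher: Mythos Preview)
Your proposal is correct and follows essentially the same route as the paper: reduce part~(1) to Lemma~\ref{lem:complete-tor} via $\LLambda^\fm M$ and \eqref{eq:lc-Tor}, then read off the formula from \eqref{eq:AB-formula} and \eqref{eq:lc-depth}; reduce part~(2) to Proposition~\ref{prp:bass-finite} via the Matlis dual and the standard duality isomorphism $\Tor^R_i(k,M)^\vee\cong\Ext^i_R(k,M^\vee)$. The only cosmetic difference is that the paper simply cites \cite[Proposition~4.4]{HBFSIn03} for the identity $\width_R M^\vee=\depth_R M$, whereas you sketch it via the finite-free swap; your separate mention of the degenerate case is unnecessary, since it is already absorbed into Proposition~\ref{prp:bass-finite}.
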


\begin{proof}
  Assume first that (1) is satisfied.  From \eqref{eq:lc-Tor} one gets
  $\Tor^{R}_{n}(k,\LLambda^\fm M)=0$. The complex $\LLambda^{\fm}M$ is
  $\fm$-adically complete and
  \begin{equation*}
    \inf \hh_{*}(\LLambda^{\fm}M)\ge \inf \hh_{*}(M)>-\infty\:.
  \end{equation*}
  Thus Lemma~\ref{lem:complete-tor} applies and yields that
  $\flatdim_R (\LLambda^\fm M)$ is at most $n-1$. Now
  \eqref{eq:lc-Tor} yields $\Tor_i^R(k,M) = 0$ for all $i \ge n$, and
  then \eqref{eq:AB-formula} and \eqref{eq:lc-depth} yield the second
  and third equalities below
  \begin{align*}
    \sup \Tor^{R}_{*}(k,M)
    & = \sup \Tor^{R}_{*}(k,\LLambda^\fm M) \\
    & = \depth R - \depth_{R} (\LLambda^\fm M) \\
    &= \depth R - \depth_{R}M\,.
  \end{align*}

  Assume now that the hypothesis in (2) holds, and set
  $(-)^{\vee}:=\Hom_R(-,E(k))$. For each integer $i$, there is an
  isomorphism
  \begin{equation*}
    \Tor^{R}_{i}(k,M)^{\vee} \cong
    \Ext^{i}_R(k, M^{\vee})\,.
  \end{equation*}
  The hypothesis and Proposition~\ref{prp:bass-finite} yield
  $\Tor_i^R(k,M) = 0$ for all $i \ge n$ and
  \begin{equation*}
    \sup \Tor_*^R(k,M)
    =  \sup \Ext^*_R(k,M^\vee)
    =  \depth R - \width_R M^\vee\,.
  \end{equation*}
  Finally one has $\width_R M^\vee = \depth_R M$; see
  \cite[Proposition 4.4]{HBFSIn03}.
\end{proof}

The final result of this section fleshes out a remark made by Fossum,
Foxby, Griffith, and Reiten at the end of Section 1 in
\cite{FFGR-75}. They phrase it as statement about nonvanishing: If $M$
is an $R$-module and $\Ext_{R}^{n}(k,M)$ is nonzero for some $n \ge
\depth R+1$ then one has $\Ext^{i}_{R}(k,M)\ne 0$ for all $i \ge
n$. The formulation below makes for an easier comparison with
Proposition \ref{prp:bass-finite}.

\begin{proposition}
  \label{prp:bass}
  Let $M$ be an $R$-complex. If $\Ext^{n}_{R}(k,M)= 0$ holds for some
  integer $n\ge \sup \hh^*(M) + \depth R+1$, then one has
  \begin{equation*}
    \Ext^{i}_{R}(k,M)= 0 \quad\text{for every $i$ in the range}
    \ \sup \hh^*(M) + \depth R+1 \le i \le n\:.
  \end{equation*}
\end{proposition}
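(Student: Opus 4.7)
The plan is to reduce the complex-valued statement to the module case quoted from Fossum, Foxby, Griffith, and Reiten. Set $s := \sup\hh^*(M)$; the cases $s = -\infty$ (in which $M$ is acyclic) and $s = +\infty$ (in which the hypothesis is vacuous) are immediate, so I may assume $s$ is an integer.

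Let $E$ be the minimal semi-injective resolution of $M$ and put $N := \zz^s(E)$, the module of cycles of $E$ in degree $s$. The brutal truncation $F := \sigma^{\ge s} E$, obtained by setting $F^j := E^j$ for $j \ge s$ and $F^j := 0$ for $j < s$, is a bounded-below complex of injective $R$-modules whose cohomology is $N$ in degree $s$ and zero elsewhere (using $\hh^j(E) = 0$ for $j > s$). Hence $F$ serves as a semi-injective resolution of $N$ placed in cohomological degree $s$. Because $E$ is minimal, the differentials on $\Hom_R(k,E)$ vanish, and so do those on $\Hom_R(k,F)$, giving
\begin{equation*}
  \Ext^m_R(k,N) \;=\; \Hom_R(k,E^{m+s}) \;=\; \Ext^{m+s}_R(k,M)
\end{equation*}
for every integer $m \ge 0$.

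Via this identification, the hypothesis $\Ext^n_R(k,M) = 0$ with $n \ge s + \depth R + 1$ transfers to $\Ext^{n-s}_R(k,N) = 0$ with $n - s \ge \depth R + 1$. Applying the contrapositive of the FFGR result to the module $N$ then forces $\Ext^j_R(k,N) = 0$ throughout the range $\depth R + 1 \le j \le n - s$, and translating back yields $\Ext^i_R(k,M) = 0$ for $s + \depth R + 1 \le i \le n$, as required. The step requiring the most care is the identification $\Ext^m_R(k,N) \cong \Ext^{m+s}_R(k,M)$ through the truncated resolution $F$: the choice of cycle module $N = \zz^s(E)$ (rather than, say, $\bb^{s+1}(E)$) is what makes the FFGR threshold $\depth R + 1$ for $N$ correspond precisely to the threshold $s + \depth R + 1$ for $M$ in the statement, and after that the proof reduces to matching indices.
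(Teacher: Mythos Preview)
Your proof is correct, but it takes a different route from the paper's. You reduce to the module case: truncating a minimal semi-injective resolution $E$ at degree $s=\sup\hh^{*}(M)$ and using minimality of $E$ to identify $\Ext^{m}_{R}(k,\zz^{s}(E))\cong\Ext^{m+s}_{R}(k,M)$ for $m\ge 0$, you then invoke the Fossum--Foxby--Griffith--Reiten statement for the module $N=\zz^{s}(E)$ as a black box. The paper instead gives a self-contained direct argument that works uniformly for complexes (and in particular \emph{proves} the FFGR remark in the module case): it passes to $S=R/(\bsx)$ for a maximal $R$-sequence $\bsx$, locates $k$ as a direct summand of some power $\fn^{t}$ of the maximal ideal of $S$, and uses the exact sequence $0\to\fn^{t}\to S\to S/\fn^{t}\to 0$ together with $\projdim_{R}S=\depth R$ to step the vanishing of $\Ext^{n}_{R}(k,M)$ down to $\Ext^{n-1}_{R}(k,M)$. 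Your approach is shorter and makes transparent that the complex case is no harder than the module case once one has a minimal resolution; the paper's approach is independent of the cited remark (which FFGR only stated) and explains concretely why the threshold $\depth R+1$ arises.
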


\begin{proof}
  We may assume that $n>\sup \hh^*(M) + \depth R+1$ holds. It suffices
  to verify that when $\Ext_{R}^{n}(k,M)$ is zero, so is
  $\Ext_{R}^{n-1}(k,M)$.

  Let $\bsx$ be a maximal regular sequence in $R$, set $S:=R/(\bsx)$
  and $\fn:=\fm/(\bsx)$. Thus, $(S,\fn,k)$ is a local ring of depth
  $0$; in particular, $(0:\fn)$, the socle of $S$, is nonzero. Thus,
  there exists a positive integer, say $s$, such that $(0:\fn)$ is
  contained in $\fn^{s}$ but not in $\fn^{s+1}$. Said otherwise, the
  composite of canonical maps
  \begin{equation*}
    (0:\fn)\lra \fn^{s}\lra \fn^{s}/\fn^{s+1}
  \end{equation*}
  is nonzero. Since the source and the target are $k$-vector spaces,
  this implies that $k$ is a direct summand of $\fn^{s}$. It thus
  suffices to verify that $\Ext_{R}^{n-1}(\fn^{s},M)=0$.
  
  The Koszul complex on $\bsx$ is a minimal free resolution of $S$
  over $R$, so one has $\projdim_{R}S=\depth R$ and hence
  \begin{equation*}
    \Ext_{R}^{j}(S,M)=0 \text{ for } j \ge \depth R + 1 + \sup\hh^*(M)\,.
  \end{equation*}
  Given this, the exact sequence
  \begin{equation*}
    0\lra \fn^{s} \lra S\lra S/\fn^{s}\lra 0
  \end{equation*}
  yields an isomorphism
  \begin{equation*}
    \Ext_{R}^{n-1}(\fn^{s}, M) \cong \Ext_{R}^{n}(S/\fn^{s},M)\,.
  \end{equation*}
  Since the length of $S/\fn^{s}$ is finite, $\Ext_{R}^{n}(k,M)=0$
  implies $\Ext_{R}^{n}(S/\fn^{s},M)=0$, and hence the isomorphism
  above yields $\Ext_{R}^{n-1}(\fn^{s}, M)=0$, as desired.
\end{proof}

\section{Flat dimension} 
\label{sec:flatdim}

\noindent
Let $R$ be a commutative noetherian ring, $M$ be an $R$-complex, and
$n$ be an integer. Avramov and Foxby~\cite[Proposition
5.3.F]{LLAHBF91} prove that $\flatdim_{R}M < n$ holds if and only if
one has $\Tor_i^{R_\fp}(k(\fp),M_\fp) =0$ for every prime $\fp$ in $R$
and all $i\ge n$. That is,
\begin{equation}
  \label{eq:flatdim}
  \flatdim_RM = \sup\{i\in\bbZ\mid \Tor_i^{R_\fp}(k(\fp),M_\fp) \ne 0 
  \text{ for some } \fp\in\Spec R\}\:.
\end{equation}
By way of the isomorphisms
\begin{equation}
  \label{eq:tor-isos}
  \Tor_i^{R_\fp}(k(\fp),M_\fp) \cong \Tor_i^R(k(\fp),M)
\end{equation}
this result compares---or may be it is the other way around---to
\cite[Theorem 1.1]{LWCSBI}; see \eqref{eq:injdim}.  Combining
\eqref{eq:flatdim} and \eqref{eq:tor-isos} with \eqref{eq:AB-formula}
one gets
\begin{equation}
  \label{eq:cf}
  \flatdim_R M = \sup_{\fp\in\Spec R}\{\depth R_\fp - \depth_{R_\fp}M_\fp\}
\end{equation}
for every $R$-complex $M$ of finite flat dimension. For modules of
finite flat dimension this equality is known from work of
Chouinard~\cite{LGC76}.

For rings of finite Krull dimension, the next theorem, which contains Theorem~\ref{thm:1},  represents a
significant strengthening of \eqref{eq:flatdim}.

\begin{theorem}
  \label{thm:fdim}
Let $R$ be a commutative noetherian ring and $M$ be an $R$-complex.

If for a prime ideal $\fp$ and $n \ge \dim R_\fp + \sup\hh_*(M)$ one has $\Tor^{R}_{n}(k(\fp),M)=0$, then
\[
\sup\Tor^{R}_{*}(k(\fp),M) = \depth R_\fp - \depth_{R_\fp}M_\fp <  n\,.
\] 
In particular, if there exists an integer $n \ge \dim R + \sup \hh_*(M)$ such that
  \begin{equation*}
    \Tor^{R}_{n}(k(\fp),M)=0 \quad\text{holds for every prime ideal $\fp$ in
      $R$}\:,
  \end{equation*}
then the flat dimension of $M$ is less than $n$.
\end{theorem}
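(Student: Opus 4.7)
The plan is to reduce the first assertion to the local case—where Proposition~\ref{prp:betti-finite} does the heavy lifting—and then to deduce the ``in particular'' clause from \eqref{eq:flatdim}.

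First, by \eqref{eq:tor-isos} I may compute the relevant Tor over the local ring $R_\fp$ in place of $R$; write $k:=k(\fp)$, $\fm:=\fp R_\fp$, and abbreviate $M_\fp$ by $N$. Exactness of localization yields $\sup\hh_{*}(N)\le\sup\hh_{*}(M)$, so the hypothesis gives $n\ge\dim R_\fp+\sup\hh_{*}(N)$ together with $\Tor^{R_\fp}_{n}(k,N)=0$. The key technical ingredient is the local-homology analogue of \eqref{eq:lc-dim}: using $\LLambda^{\fm}N\simeq\RHom_{R_\fp}(C(\fm),N)$ and the fact that the \v{C}ech complex $C(\fm)$ on a system of parameters for $R_\fp$ is a bounded complex of flat modules concentrated in cohomological degrees $[0,\dim R_\fp]$, an amplitude count gives
\[
\sup\hh_{*}^{\fm}(N)\ \le\ \sup\hh_{*}(N)+\dim R_\fp\ \le\ n.
\]
Granting the tacit boundedness hypothesis $\inf\hh_{*}(M)>-\infty$ (which passes to $N$), Proposition~\ref{prp:betti-finite}(1) then applies over $R_\fp$ and delivers both the vanishing $\Tor_{i}^{R_\fp}(k,N)=0$ for all $i\ge n$ and the formula $\sup\Tor_{*}^{R_\fp}(k,N)=\depth R_\fp-\depth_{R_\fp}N$; the vanishing at $n$ forces this supremum strictly below $n$. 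Transporting back through \eqref{eq:tor-isos} yields the first assertion.

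For the ``in particular'' clause, I would apply the first assertion at each prime $\fp\in\Spec R$: because $\dim R_\fp\le\dim R$ and $\sup\hh_{*}(M_\fp)\le\sup\hh_{*}(M)$, the hypothesis $n\ge\dim R+\sup\hh_{*}(M)$ validates the application at every $\fp$, producing $\Tor^{R}_{i}(k(\fp),M)=0$ for every $i\ge n$ and every $\fp\in\Spec R$. Feeding this into \eqref{eq:flatdim} gives $\flatdim_{R}M<n$.

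The main technical point is the local-homology bound above, which is precisely where the finiteness of the Krull dimension enters; from there Proposition~\ref{prp:betti-finite} bundles up all the remaining homological content, and everything else is bookkeeping.
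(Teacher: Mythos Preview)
Your overall strategy---localize at $\fp$ via \eqref{eq:tor-isos}, bound the relevant ``sup'' by $\dim R_\fp + \sup\hh_*(M)$, and invoke Proposition~\ref{prp:betti-finite}---is exactly the paper's. The gap is that the hypothesis $\inf\hh_*(M) > -\infty$ is \emph{not} tacit: the theorem is stated for an arbitrary $R$-complex, and the paper's proof works in that generality. Your route through condition~(1) of Proposition~\ref{prp:betti-finite} genuinely needs $\inf\hh_*(M) > -\infty$ (it feeds into Lemma~\ref{lem:complete-tor}, whose proof uses a semi-flat resolution bounded below), so as written your argument does not cover complexes such as $M=\bigoplus_{i\ge 0}\susp^{-i}k$.

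The paper avoids this by invoking condition~(2) of Proposition~\ref{prp:betti-finite} instead of~(1). Rather than bounding $\sup\hh^{\fm}_{*}(N)$, it sets $N^{\vee}=\Hom_{R_\fp}(M_\fp,E(k(\fp)))$, notes $\sup\hh^{*}(N^{\vee})=\sup\hh_{*}(M_\fp)\le \sup\hh_{*}(M)$, and applies the local \emph{cohomology} bound \eqref{eq:lc-dim} to $N^{\vee}$ to obtain $n\ge \dim R_\fp+\sup\hh^{*}(N^{\vee})\ge \sup\hh^{*}_{\fm}(N^{\vee})$. That is precisely hypothesis~(2), which carries no lower-boundedness requirement on $\hh_{*}(M)$. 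Your local-homology inequality $\sup\hh^{\fm}_{*}(N)\le \dim R_\fp+\sup\hh_{*}(N)$ is correct (it is Matlis-dual to \eqref{eq:lc-dim}, and your amplitude count can be made rigorous by replacing $N$ with a bounded-below semi-injective resolution, which exists once $\sup\hh_{*}(N)<\infty$); the problem lies solely in the downstream appeal to part~(1). Switching to part~(2) as above closes the gap with no other changes to your outline.
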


\begin{proof}
It suffices to prove the first claim; the assertion about the flat dimension of $M$ is a consequence, given \eqref{eq:flatdim}.

Fix $\fp$ and $n$ as in the hypotheses. Given \eqref{eq:tor-isos}, this
  yields $\Tor^{R_\fp}_{n}(k(\fp),M_\fp)=0$. One has the following
  (in)equalities
  \begin{equation*}
    \sup\hh_*{(M)} \ge \sup\hh_*{(M_\fp)} =
    \sup\hh^*\Hom_{R_\fp}(M_\fp,E(k(\fp)))\,.
  \end{equation*}
  Since $\dim R \ge \dim R_\fp$, it follows from \eqref{eq:lc-dim} and
  Proposition~\ref{prp:betti-finite} that
  \begin{equation*}
    \sup\Tor^{R_\fp}_{*}(k(\fp),M_\fp) = \depth R_\fp - \depth_{R_\fp}M_\fp < n\:.
  \end{equation*}
This is the desired result.
\end{proof}

The next example shows that the constraint on $n$ in
Theorem~\ref{thm:fdim} is needed.

\begin{example}
  \label{exa:injloc}
  Let $(R,\fm,k)$ be a local ring, $N$ a finitely generated
  Cohen--Macaulay $R$-module of dimension $d$, and set
  $M:=\hh_{\fm}^{d}(N)$.  There are isomorphisms
  \begin{equation*}
    \Tor_{i}^{R}(k,M) \cong \Tor_{i-d}^{R}(k,N) \quad\text{for all
      $i$}\:.
  \end{equation*}
  To see this, let $F$ be the \v{C}ech complex on a maximal
  $N$-regular sequence $\bsx$.  The complex $(\susp^d\, F)\otimes_R N$
  is quasi-isomorphic to $M$, for $\inf\hh^{*}_{\fm}(N)=\sup
  \hh^{*}_{\fm}(N)=d$, by the hypothesis on $N$. Thus, there are
  quasi-isomorphisms
  \begin{equation*}
    k\Ltensor_R M \simeq k\Ltensor_R ((\susp^d\, F)\Ltensor_R N) 
    \simeq \susp^{d}\, k \Ltensor_R N\,.
  \end{equation*}
  The isomorphisms above follow.  Thus one has
  \begin{equation*}
    \Tor_{i}^{R}(k,M) =
    \begin{cases}
      0 & \text{for $i<d$} \\
      N/\fm N & \text{for $i=d\,.$}
    \end{cases}
  \end{equation*}
  In particular, one has $\inf \Tor^{R}_{*}(k,M)=d$, while
  $\Tor^{R}_{*}(k(\fp),M)=0$ for every prime ideal $\fp\ne \fm$, since
  $M$ is $\fm$-torsion.

  Now, if in addition the inequality $d > \depth R$ holds, then
  $\flatdim_{R}M$ is infinite. To see this apply Matlis duality
  $\Tor^R_i(k,M)^{\vee} \cong \Ext_R^i(k,M^{\vee})$ and conclude from
  Proposition~\ref{prp:bass} that $\Tor_i^R(k,M)$ is nonzero for all
  $i\ge d$.

  It remains to remark that such $R$ and $N$ exist: Let $k$ be a
  field, $d$ be a positive integer, and set
  \begin{equation*}
    R:=k[[x_1,\dots, x_{d+1}]]/(x_1^2, x_1x_2,\dots, x_1x_{d+1})\,.
  \end{equation*}
  This $R$ is a local ring of dimension $d$ and depth $0$.  The
  $R$-module $N=R/(x_{1})$ is Cohen--Macaulay of dimension $d$.
\end{example}

\section{Injective dimension} 
\label{sec:injdim}

Let $M$ be an $R$-complex, by \cite[Theorem 1.1]{LWCSBI} one has
\begin{equation}
  \label{eq:injdim}
  \injdim_RM = \sup\{i\in\bbZ\mid \Ext^i_{R}(k(\fp),M) \ne 0 
  \text{ for some } \fp\in\Spec R\}\:.
\end{equation}
In view of \eqref{eq:tor-isos} this is a perfect parallel to the
formula for flat dimension \eqref{eq:flatdim}.

The equality $\flatdim_R M = \sup_{\fp\in\Spec
  R}\{\flatdim_{R_\fp}M_\fp\}$ is immediate from \eqref{eq:flatdim}
and \eqref{eq:tor-isos}. The corresponding equality for the injective
dimension only holds true under extra conditions, and the whole
picture is altogether more complicated.  If $M$ satisfies
$\inf\hh^*(M) > -\infty$, then \cite[Proposition 5.3.I]{LLAHBF91}
yields
\begin{equation}
  \label{eq:injdimbd}
  \begin{aligned}
    \injdim_RM &= \sup\{i\in\bbZ\mid \Ext^i_{R_\fp}(k(\fp),M_\fp) \ne
    0
    \text{ for some } \fp\in\Spec R\} \\
    & = \sup_{\fp\in\Spec R}\{\injdim_{R_\fp}M_\fp\}\:.
  \end{aligned}
\end{equation}
Without the boundedness condition on $\hh(M)$ the injective dimension
may increase under localization; an example is provided in
\ref{exa:injdimloc}.

The next statement, which still requires homological boundedness, is
folklore but not readily available in the literature.

\begin{theorem}
  \label{thm:injdimbd}
  Let $R$ be a commutative noetherian ring and $M$ an $R$-complex with
  $\inf\hh^*(M) > -\infty$. If there exists an integer $n \ge
  \sup\hh^{*}(M)$ such that
  \begin{equation*}
    \Ext^{n+1}_{R_\fp}(k(\fp),M_\fp)=0 \ \ 
    \text{holds for every prime ideal $\fp$ in $R$}\:,
  \end{equation*}
  then the injective dimension of $M$ is at most $n$.
\end{theorem}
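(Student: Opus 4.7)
My plan is to go straight to the minimal semi-injective resolution of $M$, use the global hypothesis to produce a gap at degree $n+1$, and then exploit cohomological boundedness to kill everything beyond that gap.

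Since $\inf\hh^*(M) > -\infty$, the complex $M$ admits a minimal semi-injective resolution $J$; by minimality it is bounded below, with $J^i = 0$ for $i < \inf\hh^*(M)$. Because $R$ is noetherian and $J$ is minimal, each term decomposes as $J^i \cong \bigoplus_{\fp} E(R/\fp)^{\mu^i(\fp,M)}$, where the Bass numbers satisfy $\mu^i(\fp,M) = \dim_{k(\fp)}\Ext^i_{R_\fp}(k(\fp),M_\fp)$. The hypothesis therefore forces $\mu^{n+1}(\fp,M)=0$ for every prime $\fp$, hence $J^{n+1}=0$.

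With $J^{n+1}=0$, both differentials touching degree $n+1$ are zero, so $J$ decomposes as a direct sum of complexes $J^{\le n}\oplus J^{\ge n+2}$. The second summand is a bounded-below complex of injectives whose cohomology agrees with $\hh^*(M)$ in degrees $\ge n+2$, and this vanishes since $n\ge \sup\hh^*(M)$. The crux is then the standard fact that a minimal bounded-below acyclic complex of injectives over a noetherian ring must be zero: if its lowest nonzero term $I^a$ were nonzero, then acyclicity would force $I^a\hookrightarrow I^{a+1}$, and injectivity of $I^a$ would split off a contractible summand $0\to I^a\xrightarrow{\simeq} I^a\to 0$, contradicting minimality. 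Since minimality of $J$ restricts to the summand $J^{\ge n+2}$, this gives $J^{\ge n+2}=0$. Combined with $J^{n+1}=0$, we conclude $J^i=0$ for every $i\ge n+1$, i.e., $\injdim_R M\le n$.

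The main thing to verify carefully is that the Bass number identity $\mu^i(\fp,M)=\dim_{k(\fp)}\Ext^i_{R_\fp}(k(\fp),M_\fp)$ is available for complexes under the assumption $\inf\hh^*(M)>-\infty$, together with the characterization of minimality via the absence of contractible summands; both are classical facts for bounded-below complexes over a noetherian ring, so no serious obstacle is expected. Notably, the proof does not reduce to a local situation: the global vanishing hypothesis is used directly, all at once, to eliminate the entire term $J^{n+1}$.
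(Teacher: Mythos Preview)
Your proof is correct and takes essentially the same approach as the paper: both pass to the minimal semi-injective resolution, use the Bass-number description of its terms to deduce $J^{n+1}=0$ from the hypothesis, and conclude $\injdim_R M\le n$. The only cosmetic difference is emphasis---the paper explicitly localizes $I$ to verify the Bass-number identity $\mu^{n+1}(\fp)=\dim_{k(\fp)}\Ext^{n+1}_{R_\fp}(k(\fp),M_\fp)$ and leaves implicit the step from $J^{n+1}=0$ to $J^{\ge n+1}=0$, whereas you cite the former and spell out the latter via the splitting $J=J^{\le n}\oplus J^{\ge n+2}$.
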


\begin{proof}
  Let $I$ be a minimal semi-injective resolution of $M$; as $\inf
  \hh^{*}(M)>-\infty$ holds one has $I^{n}=0$ for $n\ll 0$. For every
  integer $i$ one has $I^i = \coprod_{\fp\in\Spec R}
  E(R/\fp)^{(\mu_i(\fp))}$, and to prove that $\injdim_RM$ is at most
  $n$ it is sufficient to show that the index set $\mu_{n+1}(\fp)$ is
  empty for every prime $\fp$. Fix $\fp$. Since $I_{\fp}$ is a complex
  of injectives with $(I_{\fp})^{n}=0$ for $n\ll 0$, it is a minimal
  semi-injective resolution of $M_\fp$, so one has
  \begin{align*}
    0 &= \Ext^{n+1}_{R_\fp}(k(\fp),M_\fp) \\
    & = \hh^{n+1}\Hom_{R_\fp}(k(\fp),I_\fp) \\
    & = \Hom_{R_\fp}(k(\fp),(I_\fp)^{n+1}) \\
    & = \Hom_{R_\fp}(k(\fp),E(k(\fp))^{(\mu_{n+1}(\fp))})\,.
  \end{align*}
  It follows that $\mu_{n+1}(\fp)$ is empty.
\end{proof}

The next result corresponds to \eqref{eq:cf}. It removes a restriction on the boundedness of $M$
in Yassemi's version \cite[Theorem~2.10]{SYs98b} of
Chouinard's formula \cite[Corollary 3]{LGC76}.

\begin{proposition}
  \label{prp:chouinard}
  For every $R$-complex $M$ of finite injective dimension one has
  \begin{equation*}
    \injdim_R M = \sup_{\fp\in\Spec R}\{\depth R_\fp - \width_{R_\fp}M_\fp\}\:.
  \end{equation*}
\end{proposition}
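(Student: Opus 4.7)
The plan is to apply Proposition \ref{prp:bass-finite} over each localization $R_\fp$ to the complex $M_\fp$, which will yield $\sup\Ext^*_{R_\fp}(k(\fp),M_\fp) = \depth R_\fp - \width_{R_\fp}M_\fp$ at every prime; the desired equality then follows by passing to the supremum over $\fp$.

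Set $n := \injdim_R M$, finite by hypothesis, and let $I$ be a minimal semi-injective resolution of $M$. Then $I^i = 0$ for all $i>n$, while $I^n \ne 0$ by minimality. The standard decomposition $I^i = \coprod_{\fp\in\Spec R} E(R/\fp)^{(\mu_i(\fp))}$ with Bass numbers $\mu_i(\fp) = \dim_{k(\fp)}\Ext^i_{R_\fp}(k(\fp),M_\fp)$ forces $\Ext^i_{R_\fp}(k(\fp),M_\fp) = 0$ for all $i>n$ and every prime $\fp$. Combined with the inequality $\sup\hh^*_{\fp R_\fp}(M_\fp) \le \sup\Ext^*_{R_\fp}(k(\fp),M_\fp) \le n$ from \eqref{eq:lc-dim} applied over $R_\fp$, the integer $n+1$ meets both conditions required by Proposition \ref{prp:bass-finite} at each $\fp$, and that proposition delivers the local formula.

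Taking the supremum over $\fp$ on both sides gives
\begin{equation*}
\sup_{\fp\in\Spec R}\{\depth R_\fp - \width_{R_\fp}M_\fp\} = \sup_\fp \sup\Ext^*_{R_\fp}(k(\fp),M_\fp),
\end{equation*}
and since $I^n \ne 0$ forces $\mu_n(\fq) \ne 0$ for some prime $\fq$, this common supremum equals $n = \injdim_R M$.

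The main obstacle—and the point where the argument extends Yassemi's \cite[Theorem~2.10]{SYs98b}—is verifying that the Bass-number decomposition is available for a complex of finite injective dimension that may be unbounded below. Since $\injdim_R M <\infty$ still constrains the minimal semi-injective resolution to vanish above degree $n$, the local analysis of $(I_\fp)^i$ used in the proof of Theorem \ref{thm:injdimbd} to identify Bass numbers with Ext-groups applies here for each $i>n$ without any further boundedness hypothesis on $\hh^*(M)$; this is precisely the feature that frees the formula from Yassemi's requirement that $\inf\hh^*(M) > -\infty$.
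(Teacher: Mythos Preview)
Your argument has a genuine gap at the identification $\mu_i(\fp)=\dim_{k(\fp)}\Ext^i_{R_\fp}(k(\fp),M_\fp)$. That formula is not a formal consequence of the decomposition $I^i=\coprod_\fq E(R/\fq)^{(\mu_i(\fq))}$; it requires that $I_\fp$ be a (minimal) semi-injective resolution of $M_\fp$ over $R_\fp$. In the proof of Theorem~\ref{thm:injdimbd} this is secured precisely by the hypothesis $\inf\hh^*(M)>-\infty$, which forces $I^j=0$ for $j\ll 0$ and hence makes $I_\fp$ bounded below. Without that boundedness, $I_\fp$ is a complex of injective $R_\fp$-modules but need not be semi-injective, and the Bass-number identification collapses. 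This is not hypothetical: Example~\ref{exa:injdimloc} exhibits a complex $M$ with $\injdim_R M=0$---so the minimal semi-injective resolution vanishes in all positive degrees---yet $\Ext^i_{R_\fp}(k(\fp),M_\fp)\ne 0$ for \emph{every} $i\ge 0$ when $\fp\ne\fm$. Thus neither the vanishing $\Ext^{n+1}_{R_\fp}(k(\fp),M_\fp)=0$ nor the bound $\sup\Ext^*_{R_\fp}(k(\fp),M_\fp)\le n$ that you feed into Proposition~\ref{prp:bass-finite} is available, and both hypotheses of that proposition remain unverified.

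The paper avoids this obstacle by a different mechanism: it truncates the semi-injective complex as $0\to M^{\ge u}\to M\to M^{\le u-1}\to 0$, applies the bounded case (\eqref{eq:Bass-formula} and \eqref{eq:injdimbd}) to the \emph{bounded} complex $M^{\ge u}$, and then compares widths across the exact sequence by choosing $u$ adapted to the prime at hand. The inequality ``$\ge$'' comes from taking $u=-\width_{R_\fp}M_\fp$, and equality at some prime from taking $u=d-1$. If you want to salvage a direct local argument, you would need an independent reason why $\injdim_{R_\fp}M_\fp$ (or at least $\sup\Ext^*_{R_\fp}(k(\fp),M_\fp)$) is finite for every $\fp$; but that is exactly what Example~\ref{exa:injdimloc} shows can fail.
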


\begin{proof}
  Without loss of generality, we can assume that $M$ is semi-injective
  with $M^i=0$ for all $i > d := \injdim_RM$. For every $u\le d$ there
  is an exact sequence of semi-injective complexes
  \begin{equation*}
    \tag{1}
    0 \lra M^{\ge u} \lra M \lra M^{\le u-1} \lra 0
  \end{equation*}
  with $\injdim_R M^{\le u-1} \le u-1$ and $\injdim_R M^{\ge u} =
  d$. The complex $M^{\ge u}$ is bounded, so \eqref{eq:Bass-formula}
  and \eqref{eq:injdimbd} conspire to yield
  \begin{equation*}
    \tag{2}
    d = \sup_{\fp\in\Spec R}\{\depth R_\fp - \width_{R_\fp}M^{\ge u}_\fp\}\:.
  \end{equation*}

  First we establish the inequality
  \begin{equation*}
    \tag{3}
    \injdim_R M \ge \sup_{\fp\in\Spec R}\{\depth R_\fp - \width_{R_\fp}M_\fp\}\:.
  \end{equation*}
  Let $\fp$ be a prime. There are inequalities
  \begin{equation*}
    \width_{R_\fp} M_\fp \ge 
    \inf\hh_*(M_\fp) \ge \inf\hh_*(M) = - \sup\hh^*(M) \ge -d\,,
  \end{equation*}
  and without loss of generality one can assume that $\width_{R_\fp}
  M_\fp$ is finite. Consider $(1)$ for $u = -\width_{R_\fp} M_\fp$ and
  localize at $\fp$. The associated exact sequence of Tor groups
  yields $\width_{R_\fp} M_\fp = \width_{R_\fp} M^{\ge u}_\fp$, so the
  desired inequality $d \ge \depth R_\fp - \width_{R_\fp}M_\fp$
  follows from (2). It remains to prove that equality holds for some
  prime.

  Consider (1) for $u=d-1$ and choose by (2) a prime $\fp$ with
  \begin{equation*}
    d = \depth R_\fp - \width_{R_\fp}M^{\ge d-1}_\fp\:.
  \end{equation*}
  The second inequality in the next display is (3) applied to the
  complex $M^{\le d-2}$.
  \begin{equation*}
    d-2 \ge \injdim_RM^{\le d-2} \ge \depth R_\fp - \width_{R_\fp} M^{\le
      d-2}_\fp\:.
  \end{equation*}
  Eliminating $d$ and $\depth R_\fp$ between the two displays one gets
  the inequality
  \begin{equation*}
    \width_{R_\fp}M^{\ge d-1}_\fp \le \width_{R_\fp} M^{\le d-2}_\fp
    -2\:.
  \end{equation*}
  Finally, one gets $\width_{R_\fp}M_\fp = \width_{R_\fp}M^{\ge
    d-1}_\fp$ from the exact sequence of Tor groups associated to (1).
\end{proof}

We now aim for a characterization of complexes of finite injective
dimension that does not require homological boundedness.  It is based
on the following observation, of independent interest.

\begin{lemma}
  \label{lem:maximal}
  Let $M$ be an $R$-complex and $\fm$ a maximal ideal in $R$. The
  localization maps $\rho \colon M\to M_{\fm}$ and $\sigma\colon R\to
  R_{\fm}$ induce quasi-isomorphisms
  \begin{align*}
    \RHom_{R}(k(\fm),\rho) &\colon
    \RHom_R(k(\fm),M) \xra{\ \simeq\ } \RHom_R(k(\fm),M_\fm) \quad\text{and}\\
    k(\fm)\Ltensor_{R} \RHom_{R}(\sigma, M) &\colon
    k(\fm)\Ltensor_{R}\RHom_{R}(R_{\fm},M) \xra{\;\,\simeq\;\,}
    k(\fm)\Ltensor_{R} M\,.
  \end{align*}
\end{lemma}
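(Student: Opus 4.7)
Both statements express the same phenomenon: when $\fm$ is maximal, each element of $R\setminus\fm$ acts invertibly on every $\fm$-torsion $R$-module $N$—because $R/\fm^k$ is local for every $k$—so $N$ is naturally an $R_\fm$-module and the localization map $N\to N_\fm$ is an isomorphism. I would use this preliminary observation throughout.

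For the first quasi-isomorphism, my plan is to apply the adjunction \eqref{eq:lc-Tor} to both $M$ and $M_\fm$, yielding
\[
  \RHom_R(k(\fm),M)\simeq \RHom_R(k(\fm),\RGamma_\fm M)\quad\text{and}\quad
  \RHom_R(k(\fm),M_\fm)\simeq \RHom_R(k(\fm),\RGamma_\fm M_\fm).
\]
Under these, $\RHom_R(k(\fm),\rho)$ is carried to $\RHom_R(k(\fm),\RGamma_\fm\rho)$, so it suffices to see that $\RGamma_\fm\rho$ is a quasi-isomorphism. Computing via the \v{C}ech complex, $\RGamma_\fm\rho$ agrees with the localization $\RGamma_\fm M\to(\RGamma_\fm M)_\fm$; since $\RGamma_\fm M$ is derived $\fm$-torsion, the preliminary observation applied degreewise in cohomology makes this a quasi-isomorphism.

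For the second, I would fix a resolution $P\to k(\fm)$ with each $P_n$ a finitely generated free $R$-module (possible since $R$ is noetherian) and a semi-injective resolution $I\to M$, so that $k(\fm)\Ltensor_R\RHom_R(R_\fm,M)$ is modeled by $P\otimes_R\Hom_R(R_\fm,I)$. Finite generation and projectivity of each $P_n$ make the evaluation morphism
\[
  P\otimes_R\Hom_R(R_\fm,I)\lra \Hom_R(R_\fm,P\otimes_R I)
\]
a term-wise isomorphism. This identifies $k(\fm)\Ltensor_R\RHom_R(R_\fm,M)$ with $\RHom_R(R_\fm,X)$, where $X:=k(\fm)\Ltensor_R M$ is represented by $P\otimes_R I$, and transports $k(\fm)\Ltensor_R\RHom_R(\sigma,M)$ to $\RHom_R(\sigma,X)\colon\RHom_R(R_\fm,X)\to X$.

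It then remains to show that $\RHom_R(\sigma,X)$ is a quasi-isomorphism for any $X$ with $\fm$-torsion cohomology. Since $\fm$ is maximal and admits no proper specializations, a minimal semi-injective resolution of $\RGamma_\fm X\simeq X$ has all terms equal to direct sums of $E(k(\fm))$, hence is a complex of $R_\fm$-modules; replacing $X$ by this resolution, base-change adjunction gives
\[
  \RHom_R(R_\fm,X)\simeq\RHom_{R_\fm}(R_\fm\Ltensor_R R_\fm,X)\simeq\RHom_{R_\fm}(R_\fm,X)\simeq X,
\]
and a check shows that $\RHom_R(\sigma,X)$ becomes the identity under this chain. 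The main obstacle will be the bookkeeping in the last step—verifying that the successive identifications above and the tensor--hom evaluation compose to exhibit $k(\fm)\Ltensor_R\RHom_R(\sigma,M)$, and not merely some other comparison map, as the required quasi-isomorphism.
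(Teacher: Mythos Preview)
Your argument for the first quasi-isomorphism is correct and more elementary than the paper's. The paper passes to the cone $C$ of $\rho$, notes that $k(\fm)\Ltensor_R C$ is acyclic, and invokes \cite[Theorem~4.13]{BIK-12} to conclude that $\RHom_R(k(\fm),C)$ is acyclic; you instead use \eqref{eq:k} to reduce to the localization $\RGamma_\fm M\to(\RGamma_\fm M)_\fm$, which is a quasi-isomorphism since $\fm$-torsion modules are already $\fm$-local.

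The second half has a genuine gap at the evaluation step. After using that each $P_p$ is finite free, your map reads in degree $n$ as the canonical map
\[
\bigoplus_{p\ge 0}\Hom_R\bigl(R_\fm,\,P_p\otimes_R I_{n-p}\bigr)\lra
\Hom_R\Bigl(R_\fm,\ \bigoplus_{p\ge 0}P_p\otimes_R I_{n-p}\Bigr),
\]
which is an isomorphism only if $\Hom_R(R_\fm,-)$ commutes with this direct sum. As $R_\fm$ is not a finitely generated $R$-module (unless $R=R_\fm$), that fails once the sum is infinite, and nothing in the hypotheses prevents this: neither $P$ nor $I$ is assumed bounded. Thus your identification of $k(\fm)\Ltensor_R\RHom_R(R_\fm,M)$ with $\RHom_R(R_\fm,X)$ does not hold in general, and the subsequent analysis of $\RHom_R(\sigma,X)$ is moot. (There is a second, related issue: even were the evaluation an isomorphism, $P\otimes_R I$ is not known to be K-injective, so $\Hom_R(R_\fm,P\otimes_R I)$ need not represent $\RHom_R(R_\fm,X)$.) The paper avoids these difficulties by applying $\RHom_R(-,M)$ to the triangle $R\to R_\fm\to D\to\susp R$: adjunction and the first quasi-isomorphism give $\RHom_R(k(\fm),\RHom_R(D,M))\simeq 0$, and then \cite[Theorem~4.13]{BIK-12} yields $k(\fm)\Ltensor_R\RHom_R(D,M)\simeq 0$.
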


\begin{proof}
  In the derived category of $R$, consider the distinguished triangle
  \begin{equation*}
    M \xra{\;\,\rho\;\,} M_\fm \lra C \lra \susp M\,.
  \end{equation*}
  The induced morphism $k(\fm)\Ltensor_R \rho$ is a quasi-isomorphism,
  so $k(\fm)\Ltensor_R C$ is acyclic. Then $\RHom_R(k(\fm),C)$ is also
  acyclic, by \cite[Theorem 4.13]{BIK-12}, whence the map
  $\RHom_{R}(k(\fm),\rho)$ is a quasi-isomorphism, as claimed.

  In the same vein, the distinguished triangle $R \to R_\fm \to D \to
  \susp R$ induces a distinguished triangle
  \begin{equation*}
    \RHom_{R}(D,M) \lra \RHom_{R}(R_{\fm},M)
    \xra{\;\,\RHom_{R}(\sigma,M)\;\,} M \lra \susp \RHom_{R}(D,M)\,.
  \end{equation*}
  By adjunction $\RHom_{R}(k(\fm),\RHom_{R}(\sigma,M))$ is a
  quasi-isomorphism, so that
  \begin{equation*}
    \RHom_{R}(k(\fm),\RHom_{R}(D,M))
  \end{equation*}
  is acyclic. Thus the complex $k(\fm)\Ltensor_R\RHom_R(D,M)$ is
  acyclic by \cite[Theorem 4.13]{BIK-12}, which justifies the second
  of the desired quasi-isomorphisms.
\end{proof}

\begin{proposition}
  \label{prp:artin}
  Let $M$ be an $R$-complex and $\fm$ a maximal ideal in $R$. If one
  has $\Ext_R^n(k(\fm),M)=0$ for some $n\ge \dim R_\fm +
  \sup\hh^*(M_\fm)$, then $\Ext_R^i(k(\fm),M)=0$ holds for all $i \ge
  n$ and there are equalities
  \begin{align*}
    \sup\Ext^{*}_{R}(k(\fm),M) &= \depth R_\fm - \width_{R_\fm} M_\fm \\
    &= \depth R_\fm - \width_{R_\fm} \RHom_R(R_\fm,M)\,.
  \end{align*}
\end{proposition}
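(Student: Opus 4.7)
The plan is to reduce the entire statement to the local ring $R_\fm$ and then invoke Proposition~\ref{prp:bass-finite}. The first step is to identify $\Ext^*_R(k(\fm),M)$ with $\Ext^*_{R_\fm}(k(\fm),M_\fm)$. The first quasi-isomorphism of Lemma~\ref{lem:maximal} gives $\RHom_R(k(\fm),M)\simeq \RHom_R(k(\fm),M_\fm)$, and since $M_\fm$ is naturally an $R_\fm$-complex and $k(\fm)$ an $R_\fm$-module, the standard change-of-rings isomorphism further identifies this with $\RHom_{R_\fm}(k(\fm),M_\fm)$. In particular, the hypothesis $\Ext^n_R(k(\fm),M)=0$ transfers to $\Ext^n_{R_\fm}(k(\fm),M_\fm)=0$.

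Next I verify the hypothesis of Proposition~\ref{prp:bass-finite} applied over $R_\fm$. By the first inequality in \eqref{eq:lc-dim}, applied to $M_\fm$ over the local ring $R_\fm$, one has
\[
  \sup\hh^{*}_{\fm R_\fm}(M_\fm)\le \dim R_\fm + \sup\hh^{*}(M_\fm)\le n\,.
\]
Proposition~\ref{prp:bass-finite} then delivers the vanishing $\Ext^i_{R_\fm}(k(\fm),M_\fm)=0$ for all $i\ge n$, which transfers back to $\Ext^i_R(k(\fm),M)=0$ by the reduction of the first paragraph, as well as the first of the two desired equalities $\sup\Ext^{*}_R(k(\fm),M)=\depth R_\fm - \width_{R_\fm}M_\fm$.

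For the second equality it suffices to show $\width_{R_\fm}\RHom_R(R_\fm,M)=\width_{R_\fm}M_\fm$. Since both $\RHom_R(R_\fm,M)$ and $M_\fm$ carry natural $R_\fm$-structures, flatness of $R_\fm$ over $R$ yields the base-change identifications $k(\fm)\Ltensor_R X \simeq k(\fm)\Ltensor_{R_\fm} X$ for each of these complexes $X$. Combining this with the second quasi-isomorphism of Lemma~\ref{lem:maximal}, and with the evident $k(\fm)\Ltensor_R M \simeq k(\fm)\Ltensor_{R_\fm}M_\fm$, one obtains a chain of quasi-isomorphisms
\[
  k(\fm)\Ltensor_{R_\fm}\RHom_R(R_\fm,M)\simeq k(\fm)\Ltensor_R M \simeq k(\fm)\Ltensor_{R_\fm}M_\fm\,,
\]
from which the equality of widths, and hence the second equality, follows. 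The proof is thus a straightforward assembly of prior results; the only point that requires care is keeping the change-of-scalars isomorphisms straight in the reduction to $R_\fm$.
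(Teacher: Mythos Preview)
Your proof is correct and follows essentially the same route as the paper's: both reduce to $R_\fm$ via Lemma~\ref{lem:maximal} and adjunction, invoke Proposition~\ref{prp:bass-finite} (using \eqref{eq:lc-dim} to check its hypothesis), and then read off the equality of widths from the second quasi-isomorphism in Lemma~\ref{lem:maximal}. You are a bit more explicit than the paper in spelling out the base-change identifications needed to pass from $\Ltensor_R$ to $\Ltensor_{R_\fm}$ in the width comparison, but the argument is the same.
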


\begin{proof}
  The first isomorphism below is by Lemma~\ref{lem:maximal}; the
  second is by adjunction.
  \begin{equation*}
    \Ext_R^*(k(\fm),M) \cong \Ext_R^*(k(\fm),M_\fm) 
    \cong \Ext^{*}_{R_\fm}(k(\fm),M_\fm)\,.
  \end{equation*}
  In view of these isomorphisms and the assumption on $n$, Proposition
  \ref{prp:bass-finite} now yields
  \begin{equation*}
    \sup\Ext^{*}_{R}(k(\fm),M) = \depth R_\fm - \width_{R_\fm} M_\fm < n\,.
  \end{equation*}
  It remains to observe that the width of $M_\fm$ and
  $\RHom_R(R_\fm,M)$ coincide, by the second quasi-isomorphism in
  Lemma~\ref{lem:maximal}.
\end{proof}

\begin{corollary}
  \label{co:injdim}
  Let $R$ be an artinian ring and $M$ an $R$-complex. If there exists
  an integer $n \ge \sup\hh^{*}(M)$ such that
  \begin{equation*}
    \Ext^{n}_{R}(k(\fp),M)=0 \quad
    \text{holds for every prime ideal $\fp$ in $R$}\:,
  \end{equation*}
  then the injective dimension of $M$ is less than $n$.
\end{corollary}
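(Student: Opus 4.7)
The plan is to reduce to Proposition~\ref{prp:artin} applied one prime at a time and then invoke the characterization \eqref{eq:injdim} of injective dimension. Since $R$ is artinian, its Krull dimension is zero, so every prime of $R$ is maximal and $\dim R_\fp = 0$ for every $\fp \in \Spec R$; in particular Proposition~\ref{prp:artin}, which is stated for a maximal ideal, is available at every prime of~$R$.

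Fix a prime $\fp$ of $R$. To apply Proposition~\ref{prp:artin} I need the inequality $n \ge \dim R_\fp + \sup \hh^*(M_\fp)$. The first summand vanishes by artinianness, and exactness of localization gives $\sup \hh^*(M_\fp) \le \sup \hh^*(M) \le n$, so the required bound holds automatically. Combined with the hypothesis $\Ext_R^n(k(\fp),M) = 0$, Proposition~\ref{prp:artin} then delivers $\Ext_R^i(k(\fp),M) = 0$ for every $i \ge n$. Letting $\fp$ range over $\Spec R$ and invoking \eqref{eq:injdim} yields $\injdim_R M < n$, as desired.

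There is essentially no obstacle here, only two observations to record: first, that artinianness collapses the dimensional shift $\dim R_\fp$ in the hypothesis of Proposition~\ref{prp:artin} to zero and turns every prime into a maximal ideal, so a single proposition governs the whole spectrum at once; and second, that $\sup \hh^*(M_\fp) \le \sup \hh^*(M)$, which is immediate from exactness of localization. No boundedness from below on $\hh^*(M)$ is needed, because the reduction routes through \eqref{eq:injdim} rather than through a direct construction of a semi-injective resolution.
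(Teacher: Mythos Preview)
Your argument is correct and essentially identical to the paper's own proof: both note that artinianness makes every prime maximal with $\dim R_\fp = 0$, verify the chain $n \ge \sup\hh^*(M) \ge \sup\hh^*(M_\fp) = \dim R_\fp + \sup\hh^*(M_\fp)$, apply Proposition~\ref{prp:artin} at each prime, and conclude via \eqref{eq:injdim}.
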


\begin{proof}
  Every prime ideal $\fp$ in $R$ is maximal and there are inequalities
  \begin{equation*}
    n \ge \sup\hh^*(M) \ge \sup\hh^*(M_\fp) = \dim R_\fp + \sup\hh^*(M_\fp)\,.
  \end{equation*}
  Thus the claim follows from \eqref{eq:injdim} and Proposition
  \ref{prp:artin}.
\end{proof}

\begin{remark}
  \label{rem:splf}
  In the sequel we require the invariant
  \begin{equation*}
    \splf R = \sup\{\projdim_RF \mid F\ \text{\rm is a flat
      $R$-module}\}.
  \end{equation*}
  Every flat $R$-module is projective if and only if $R$ is artinian,
  so $\splf R > 0$ holds if $\dim R > 0$, and from work of Jensen
  \cite[Proposition~6]{CUJ70} and Raynaud and Gruson
  \cite[thm.~II.3.2.6]{LGrMRn71} one gets the upper bound $\splf R \le
  \dim R$. A result of Gruson and Jensen \cite[Theorem~7.10]{LGrCUJ81}
  yields another bound on the invariant $\splf R$: If $R$ has
  cardinality at most $\aleph_m$ for some natural number $m$, then one
  has $\splf R \le m+1$. Thus for countable rings, and for
  $1$-dimensional rings, the bound in the theorem below is $n \ge \dim
  R + \sup\hh^*(M)$.
\end{remark}

\begin{theorem}
  \label{thm:injdim}
  Let $R$ be a commutative noetherian ring with $\dim R \ge 1$ and $M$
  an $R$-complex. If there exists an integer $n \ge \dim R - 1 + \splf
  R + \sup\hh^{*}(M)$ such that
  \begin{equation*}
    \Ext^{n}_{R}(k(\fp),M)=0 \quad
    \text{holds for every prime ideal $\fp$ in $R$}\:,
  \end{equation*}
  then the injective dimension of $M$ is less than $n$.
\end{theorem}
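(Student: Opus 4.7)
The goal is to show that $\Ext^{i}_{R}(k(\fp),M)=0$ holds for every prime $\fp$ in $R$ and every $i\ge n$; by \eqref{eq:injdim} this gives $\injdim_{R}M < n$. I may assume $\sup\hh^{*}(M)$ is a finite integer, as otherwise no admissible $n$ exists, and the case $\sup\hh^{*}(M)=-\infty$ is trivial. The plan is to bifurcate on whether $\fp$ is maximal: maximal ideals will be handled directly by Proposition~\ref{prp:artin}, while nonmaximal primes require a change-of-rings reduction to the local ring $R_{\fp}$, where Proposition~\ref{prp:bass-finite} then applies. The bound $n\ge \dim R-1+\splf R+\sup\hh^{*}(M)$ is calibrated precisely so that both branches go through; the inequality $\splf R \ge 1$ from Remark~\ref{rem:splf}, which is valid because $\dim R \ge 1$, is the slack that makes the maximal case work.

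For a maximal ideal $\fm$, I invoke Proposition~\ref{prp:artin}, whose hypothesis $n \ge \dim R_{\fm}+\sup\hh^{*}(M_{\fm})$ follows from the standing assumption via
\[
n \ge (\dim R - 1) + \splf R + \sup\hh^{*}(M) \ge \dim R_{\fm}+\sup\hh^{*}(M_{\fm})\,,
\]
which uses $\dim R\ge \dim R_{\fm}$, $\sup\hh^{*}(M)\ge \sup\hh^{*}(M_{\fm})$, and $\splf R\ge 1$. The proposition then yields $\Ext^{i}_{R}(k(\fm),M)=0$ for all $i\ge n$.

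For a nonmaximal prime $\fp$, so $\dim R_{\fp}\le \dim R-1$, I use that $R_{\fp}$ is flat over $R$, hence $\projdim_{R}R_{\fp}\le \splf R$. Set $N := \RHom_{R}(R_{\fp},M)$; a standard hypercohomology argument gives $\sup\hh^{*}(N)\le \projdim_{R}R_{\fp}+\sup\hh^{*}(M)\le \splf R+\sup\hh^{*}(M)$, and derived adjunction provides isomorphisms
\[
\Ext^{i}_{R}(k(\fp),M) \;\cong\; \Ext^{i}_{R_{\fp}}(k(\fp),N) \quad \text{for every } i,
\]
so the vanishing hypothesis $\Ext^{n}_{R}(k(\fp),M)=0$ transports to $\Ext^{n}_{R_{\fp}}(k(\fp),N)=0$. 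Applying \eqref{eq:lc-dim} to $N$ over the local ring $R_{\fp}$ yields
\[
\sup\hh^{*}_{\fp R_{\fp}}(N) \le \dim R_{\fp}+\sup\hh^{*}(N) \le (\dim R-1)+\splf R+\sup\hh^{*}(M) \le n\,,
\]
so Proposition~\ref{prp:bass-finite} applied over $R_{\fp}$ delivers $\Ext^{i}_{R_{\fp}}(k(\fp),N)=0$, and hence $\Ext^{i}_{R}(k(\fp),M)=0$, for all $i\ge n$.

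The main obstacle is calibrating the bound so that a single threshold handles both branches: the nonmaximal case benefits from $\dim R_{\fp}\le \dim R-1$ but pays the cost $\splf R$ coming from $\projdim_{R}R_{\fp}$, while the maximal case enjoys neither saving. The reconciliation hinges on the single arithmetic fact $\splf R\ge 1$, which supplies exactly the $+1$ of slack needed in the maximal branch; absent the assumption $\dim R\ge 1$ this reduction would fail, and indeed Corollary~\ref{co:injdim} already covers the artinian situation with a sharper bound.
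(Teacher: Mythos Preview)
Your proof is correct and follows essentially the same approach as the paper's: both use the adjunction $\Ext_R^*(k(\fp),M)\cong\Ext_{R_\fp}^*(k(\fp),\RHom_R(R_\fp,M))$ together with Proposition~\ref{prp:bass-finite} for the generic case and Proposition~\ref{prp:artin} for the maximal case. The only cosmetic difference is the order of the bifurcation---you split first on whether $\fp$ is maximal, whereas the paper splits on whether $n\ge \dim R_\fp+\splf R+\sup\hh^*(M)$ and then observes that failure of this inequality forces $\fp$ to be maximal---and you make explicit the use of $\splf R\ge 1$ (from Remark~\ref{rem:splf}) in verifying the hypothesis of Proposition~\ref{prp:artin}, a step the paper leaves implicit.
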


\begin{proof}
  Fix a prime ideal $\fp$ and consider the $R_{\fp}$-complex
  $N:=\RHom_{R}(R_{\fp},M)$.  One has
  \begin{equation*}
    \hh^{i}(N) = \Ext_{R}^{i}(R_{\fp},M) = 0 
    \quad\text{for } i> \projdim_RR_\fp + \sup\hh^{*}(M)\,,
  \end{equation*} 
  and standard adjunction yields
  \begin{equation*}
    \Ext_{R}^{*}(k(\fp),M)\cong \Ext_{R_{\fp}}^{*}(k(\fp),N)\,.
  \end{equation*}
  Since $R_{\fp}$ is a flat $R$-module, $\projdim_RR_\fp$ is at most
  $\splf R$.  Given this, \eqref{eq:lc-dim} yields
  \begin{equation*}
    \hh_{\fp R_{\fp}}^{i}(N)=0 \quad\text{for } 
    i > \dim R_{\fp} + \splf R + \sup\hh^{*}(M)\,.
  \end{equation*}
  Thus, if $n \ge \dim R_{\fp} + \splf R + \sup\hh^{*}(M)$ holds, then
  Proposition~\ref{prp:bass-finite} yields
  \begin{equation*}
    \sup\Ext^{*}_{R}(k(\fp),M)= \depth R_\fp - \width_{R_\fp} N < n\,.
  \end{equation*}
  The same equality also holds when $n < \dim R_{\fp} + \splf R +
  \sup\hh^{*}(M)$, for then the assumption on $n$ forces $\dim R_\fp =
  \dim R$, so $\fp$ is a maximal ideal and so
  Proposition~\ref{prp:artin} applies.  Now the desired conclusions
  follows from \eqref{eq:injdim}.
\end{proof}

\begin{remark}
  \label{rmk:injdim}
  As noted in Remark~\ref{rem:splf}, one has $\splf R\leq \dim
  R$. Thus Theorem~\ref{thm:3} is a consequence of
  Corollary~\ref{co:injdim} and Theorem~\ref{thm:injdim}.
\end{remark}

Confer the following result and Proposition~\ref{prp:chouinard}.

\begin{corollary}
  \label{cor:chouinard}
  For every $R$-complex of finite injective dimension one has
  \begin{equation*}
    \injdim_R M = \sup_{\fp\in\Spec R}\{\depth R_\fp -
    \width_{R_\fp}\RHom_R(R_\fp,M)\}\,.
  \end{equation*}
\end{corollary}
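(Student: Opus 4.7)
The plan is to invoke the global Ext-vanishing formula \eqref{eq:injdim}, which gives $\injdim_R M = \sup_{\fp} \sup\Ext^*_R(k(\fp),M)$, and to rewrite each summand $\sup\Ext^*_R(k(\fp), M)$ as a width by applying the Bass formula \eqref{eq:Bass-formula} to the $R_\fp$-complex $N_\fp := \RHom_R(R_\fp, M)$.

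First I would verify that $N_\fp$ has finite injective dimension over $R_\fp$. For any injective $R$-module $J$ and any $R_\fp$-module $L$, standard Hom-tensor adjunction yields
\[
  \RHom_{R_\fp}(L, \Hom_R(R_\fp, J)) \simeq \RHom_R(L \Ltensor_{R_\fp} R_\fp, J) \simeq \Hom_R(L, J),
\]
so $\Hom_R(R_\fp, J)$ is injective as an $R_\fp$-module. Choosing a semi-injective resolution $I$ of $M$ with $I^i = 0$ for $i \gg 0$ (available because $\injdim_R M < \infty$), the complex $\Hom_R(R_\fp, I)$ represents $N_\fp$ as a bounded-above complex of injective $R_\fp$-modules; hence $\injdim_{R_\fp} N_\fp < \infty$.

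Next, the same adjunction applied with $L = k(\fp)$ gives $\sup\Ext^*_{R_\fp}(k(\fp), N_\fp) = \sup\Ext^*_R(k(\fp), M)$. The Bass formula \eqref{eq:Bass-formula} applied to $N_\fp$ over the local ring $R_\fp$ then yields
\[
  \depth R_\fp - \width_{R_\fp} N_\fp = \sup\Ext^*_R(k(\fp), M),
\]
and taking the supremum over $\fp \in \Spec R$, combined with \eqref{eq:injdim}, produces the desired equality. The only substantive step is the injectivity-preservation check above; primes at which $N_\fp$ happens to be acyclic contribute $-\infty$ to both sides under the standard conventions $\width(\text{acyclic})=\infty$ and $\sup\Ext^*(-,\text{acyclic})=-\infty$, so they present no obstacle.
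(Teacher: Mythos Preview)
Your approach is correct and more direct than the paper's. The paper splits into cases: for artinian $R$ it combines Lemma~\ref{lem:maximal} with Proposition~\ref{prp:chouinard}; otherwise it cites the last display in the proof of Theorem~\ref{thm:injdim}, which in turn rests on Proposition~\ref{prp:bass-finite} together with a bound on $\sup\hh^*_{\fp R_\fp}(N_\fp)$ coming from $\projdim_R R_\fp$. Your argument bypasses all of this machinery by showing directly that $N_\fp = \RHom_R(R_\fp, M)$ has finite injective dimension over $R_\fp$ whenever $\injdim_R M$ is finite, after which \eqref{eq:Bass-formula} applies immediately.

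There is, however, one step that needs tightening. You conclude $\injdim_{R_\fp} N_\fp < \infty$ from the fact that $\Hom_R(R_\fp, I)$ is a bounded-above complex of injective $R_\fp$-modules. But being quasi-isomorphic to a bounded-above complex of injectives does \emph{not} by itself force finite injective dimension: over $R = k[x]/(x^2)$ the complex $\cdots \xrightarrow{x} R \xrightarrow{x} R \to 0$ is a bounded-above complex of injectives quasi-isomorphic to $k$, yet $\injdim_R k = \infty$. What you need, and what is true here, is that $\Hom_R(R_\fp, I)$ is moreover \emph{semi-injective} over $R_\fp$. This follows from the same adjunction you already wrote down, applied at the level of complexes: for any $R_\fp$-complex $A$ one has $\Hom_{R_\fp}(A, \Hom_R(R_\fp, I)) \cong \Hom_R(A, I)$, and since $I$ is semi-injective over $R$ the right-hand side is acyclic whenever $A$ is. With that one-line addition your proof is complete, and arguably cleaner than the paper's.
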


\begin{proof}
  Given Lemma~\ref{lem:maximal}, the desired equality is restatement
  of Proposition \ref{prp:chouinard} in case $R$ is artinian.  If $R$
  is not artinian, then one has $\dim R \ge 1$ and the equality is
  immediate from \eqref{eq:injdim} and the last display in the proof
  of Theorem \ref{thm:injdim}.
\end{proof}

\begin{remark}
  Let $R$ be a complete local domain of positive dimension. One has
  $\width_{R_{(0)}}R_{(0)} =0$, but the complex $\RHom_R(R_{(0)},R)$
  is acyclic, see \cite[Example 4.20]{BIK-12}, so
  $\width_{R_{(0)}}\RHom_R(R_{(0)},R) = \infty$. We do not
  know how the numbers $\width_{R_\fp}M_\fp$ and
  $\width_{R_\fp}\RHom_R(R_\fp,M)\}$ from
  Proposition~\ref{prp:chouinard} and Corollary~\ref{cor:chouinard} 
  compare in general.
\end{remark}

\section{Examples} 
\label{sec:exa}

In this section we describe examples to illustrate that, for complexes
whose cohomology is not bounded below, finiteness of injective
dimension does not behave well under localization or passage to
torsion subcomplexes. This builds on \cite{XWCSBI10}.

\begin{remark}
  \label{rem:semiinjective}
  Let $R$ be a ring. A complex $I$ of injective $R$-modules is
  semi-injective if and only if for each (equivalently, for some)
  integer $n$ the quotient complex $I^{\le n}$ is semi-injective. This
  is immediate from the exact sequence of complexes
  \begin{equation*}
    0\lra I^{> n} \lra I \lra I^{\le n}\lra 0
  \end{equation*}
  since $I^{>n}$ is always semi-injective.
\end{remark}

\begin{remark}
  \label{rem:faithful}
  Let $(R,\fm,k)$ be a local ring and $E$ be the injective envelope of
  $k$. One has $(E^\bbN)_\fp\ne 0$ for every prime ideal in
  $R$. Indeed, the claim is trivial if $R$ is artinian. If $R$ is not
  artinian, then one can choose an element $e = (e_n)_{n\in\bbN}$ in
  $E^\bbN$ with $\fm^n \subseteq (0:e_n) \not\supseteq \fm^{n-1}$. The
  map $R \to E^\bbN$ given by $1 \mapsto e$ is injective by Krull's
  intersection theorem, so $R$ is a submodule of $E^\bbN$.
\end{remark}

\begin{example}
  \label{exa:injdimloc}
  Let $(R,\fm,k)$ be a local ring such that $(0:x) = (x)$ holds for
  some $x\in \fm$; set $S = R/(x)$. The complex
  \begin{equation*}
    \cdots \xra{\:\:x\:\:} R \xra{\:\:x\:\:} R \xra{\:\:x\:\:} R \lra 0
  \end{equation*}
  concentrated in nonnegative degrees has homology $S$ in degree $0$
  and zero elsewhere. Dualizing with respect to $E$, the injective
  envelope of $k$ over $R$, yields a complex
  \begin{equation*}
    I := 0 \lra E \xra{\:\:x\:\:} E \xra{\:\:x\:\:} E \xra{\:\:x\:\:} \cdots
  \end{equation*}
  of injective $R$-modules. It is the minimal injective resolution of
  $E_S := \Hom_R(S,E)$ over $R$. By periodicity, every injective
  syzygy of $E_S$ is $E_S$. Consider the complex $J =
  \prod_{n>0}\susp^n I$, which is a semi-injective resolution of
  $\prod_{n>0}\susp^n E_S$.

  \medskip

  \emph{Claim}.\ The complex $M := J^{\le 0}$ has injective dimension
  $0$, whereas for each prime ideal $\fp \ne \fm$, one has that
  $\injdim_{R_\fp}M_\fp$ is infinite.

  \medskip

  Indeed, since $J$ is semi-injective, so is $M$, by
  Remark~\ref{rem:semiinjective}. Since the cohomology module
  $\hh^0(M) \cong (E_S)^\bbN$ is nonzero, it follows that
  $\injdim_{R}M=0$ holds.

  Fix a prime ideal $\fp \ne \fm$. For $i <0$ one has $\hh^i(M) =
  \hh^i(J) = E_S$ and, therefore, $\hh^i(M_\fp) =
  \hh^{i}(M)_{\fp}=0$. This justifies the first quasi-isomorphism in
  the computation below; the rest are standard.
  \begin{align*}
    \RHom_{R_\fp}(k(\fp),M_\fp) & \simeq
    \RHom_{R_\fp}(k(\fp), ((E_S)^\bbN)_\fp) \\
    &\simeq \RHom_{R}(R/\fp, (E_S)^\bbN)_\fp \\
    &\simeq (\RHom_{R}(R/\fp,  E_S)^\bbN)_\fp \\
    &\simeq (\Hom_{R}(R/\fp,  I)^\bbN)_\fp \\
    &\cong \Big(\big(\coprod_{i\ge 0}\susp^{-i}
    \Hom_{R}(R/\fp,E)\big)^\bbN\Big)_\fp \\
    &\cong \coprod_{i\ge0}\susp^{-i} (\Hom_{R}(R/\fp, E)^\bbN)_\fp\:.
  \end{align*}
  The first isomorphism holds because $x=0$ in $R/\fp$, for $x^{2}=0$,
  and hence the induced differential on the complex $\Hom_{R}(R/\fp,
  I)$ is zero.

  The module $E_{R/\fp} := \Hom_R(R/\fp,E)$ is the injective envelope
  of $k$ over the domain $R/\fp$. The computation above shows that for
  every $i\ge 0$ there is an isomorphism as $R/\fp$-modules
  \begin{equation*}
    \Ext^i_{R_\fp}(k(\fp),M_\fp) \cong ((E_{R/\fp})^\bbN)_{(0)}\:.
  \end{equation*}
  Thus Remark~\ref{rem:faithful} yields
  $\Ext^i_{R_\fp}(k(\fp),M_\fp)\ne 0$ for all $i\ge0$; hence
  $\injdim_{R_\fp}M_\fp$ is infinite, as claimed.
\end{example}

\begin{example}
  Let $k$ be a field and $R:=k[|x,y|]/(x^{2})$. Since $(0:x) = (x)$,
  we are in the situation considered in the previous example. Let $M$
  be the complex of injectives with injective dimension zero
  constructed there. We claim that the injective dimension of the
  complexes $M_{y}$ and $\varGamma_{(y)}M$ are infinite.

  Indeed, observe that $M_{y}\cong M_{\fp}$ where $\fp$ is the prime
  ideal $(x)$ of $R$, so $\injdim_{R}M_{y}$ is infinite, by the claim
  in the previous example. Since $C(y)\otimes_R M$ is quasi-isomorphic
  to $\varGamma_{(y)}M$ and there is an exact sequence
  \begin{equation*}
    0\lra \susp^{-1}M_{y} \lra C(y)\otimes_R M \lra M \lra 0,
  \end{equation*}
  it follows that the injective dimension of $\varGamma_{(y)}M$ is
  infinite as well.
\end{example}

\section*{Acknowledgments} 

We thank Peder Thompson for conversations that helped clarify the
material in Section~\ref{sec:injdim}.

\providecommand{\MR}[1]{\mbox{\href{http://www.ams.org/mathscinet-getitem?mr=#1}{#1}}}
\renewcommand{\MR}[1]{\mbox{\href{http://www.ams.org/mathscinet-getitem?mr=#1}{#1}}}
\providecommand{\arxiv}[2][AC]{\mbox{\href{http://arxiv.org/abs/#2}{\sf
      arXiv:#2 [math.#1]}}} \def\cprime{$'$}
\providecommand{\MR}{\relax\ifhmode\unskip\space\fi MR }
\providecommand{\MRhref}[2]{%
  \href{http://www.ams.org/mathscinet-getitem?mr=#1}{#2} }
\providecommand{\href}[2]{#2}

\end{document}